\theoremstyle{plain}
\newtheorem{theorem}{Theorem}
\newaliascnt{proposition}{theorem}
\newaliascnt{lemma}{theorem}
\newaliascnt{corollary}{theorem}
\newaliascnt{fact}{theorem}
\newaliascnt{observation}{theorem}
\newaliascnt{conjecture}{theorem}
\newaliascnt{definition}{theorem}
\newaliascnt{example}{theorem}
\newaliascnt{question}{theorem}
\newaliascnt{remark}{theorem}
\newaliascnt{property}{theorem}
\newaliascnt{construction}{theorem}
\newaliascnt{setting}{theorem}
\theoremstyle{plain}
\newtheorem{proposition}[proposition]{Proposition}
\newtheorem{lemma}[lemma]{Lemma}
\newtheorem{fact}[fact]{Fact}
\theoremstyle{definition}
\newtheorem{definition}[definition]{Definition}
\theoremstyle{remark}
\providecommand{\bfalpha}{\mathbold{\alpha}}
\providecommand{\bfgamma}{\mathbold{\gamma}}
\providecommand{\bfdelta}{\mathbold{\delta}}
\newcommand{\locus}{\mathrm{locus}}
\newcommand{\pr}{\mathrm{pr}}
\title[Existentially generated subfields of large fields]{\rm\large Existentially generated subfields of large fields}
\author{Sylvy Anscombe}
\thanks{\today}
\address{Jeremiah Horrocks Institute, University of Central Lancashire, Preston PR1 2HE, United Kingdom}
\email{sanscombe@uclan.ac.uk}
\begin{document}
\begin{abstract}
We study subfields of large fields which are generated by infinite existentially definable subsets.
We say that such subfields are {\bf existentially generated}.

Let $L$ be a large field of characteristic exponent $p$, and let $E\subseteq L$ be an infinite existentially generated subfield.
We show that $E$ contains $L^{(p^{n})}$, the $p^{n}$-th powers in $L$, for some $n<\omega$.
This generalises a result of Fehm
from \cite{Fehm10},
which shows $E=L$, under the assumption that $L$ is perfect.
Our method is to first study existentially generated subfields of henselian fields.
Since $L$ is existentially closed
in the henselian field $L((t))$,
our result follows.
\end{abstract}
\maketitle

Large fields were introduced in \cite{Pop96} by Pop:
A field $L$ is {\bf large}\footnote{Large fields are also known as {\em ample fields}.}
if every smooth curve defined over $L$ with at least one $L$-rational point has infinitely many $L$-rational points.
A survey of the theory of large fields is given in \cite{BF}.

Our fields have characteristic exponent $p$, i.e.\ $p$ is the characteristic, if this is positive, and otherwise $p=1$.
A subset $X\subseteq L$ is {\bf existentially definable} if it is defined by an existential formula from the language of rings, allowing parameters.
We denote by $(X)$ the subfield generated by $X$.
A subfield $E\subseteq L$ is {\bf existentially generated} if there is an infinite existentially definable subset $X\subseteq L$ which generates $E$, i.e.\ $E=(X)$.

In \autoref{section:proof} we prove the following theorem.

\begin{theorem}\label{thm:main.theorem}
Let $L$ be a large field of characteristic exponent $p$,
and let $E\subseteq L$ be an existentially generated subfield.
Then we have
$$L^{(p^{n})}\subseteq E,$$
for some $n<\omega$,
where $L^{(p^{n})}=\{x^{p^{n}}|x\in L\}$ is the subfield of $p^{n}$-th powers.
\end{theorem}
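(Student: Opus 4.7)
My plan follows the strategy indicated in the abstract: (1) establish the theorem for henselian valued fields, and then (2) transfer to large fields using the fact that $L$ is existentially closed in the henselian field $L((t))$, which is a classical characterisation of largeness.

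For the henselian case, I will consider a henselian valued field $K$ of characteristic exponent $p$ with an infinite existentially definable subset $Y \subseteq K$, and prove the corresponding statement $K^{(p^n)} \subseteq (Y)$. The defining existential formula presents $Y$ as the projection $\pi_1(V(K))$ of $K$-points on a variety $V \subseteq \mathbb{A}^{N+1}$. After restricting to an irreducible component $W$ of $V$ for which $\pi_1(W(K))$ remains infinite, the map $\pi_1 \colon W \to \mathbb{A}^1$ is dominant, giving a function-field inclusion $K(t) \hookrightarrow K(W)$. Passing to a suitable curve through a smooth $K$-point of $W$ (after desingularising/normalising), this inclusion has a well-defined inseparability degree $p^n$, and the separable part of the map lifts, via Hensel's lemma in an \'etale neighbourhood of the smooth point, to a valuation-open subset of $K$. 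Consequently $Y$ contains the $p^n$-th powers of a valuation-open set. A short computation (shift to capture a neighbourhood of $0$, then invert any nonzero element of small valuation) shows that any valuation-open subset of $K$ generates $K$ as a subfield, and hence $K^{(p^n)} \subseteq (Y)$.

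For the reduction to the theorem, let $X = \varphi(L,\bar a)$ be an infinite existentially definable subset of $L$ with $(X) = E$, and write $\tilde X := \varphi(L((t)),\bar a)$; by the existential closure of $L$ in $L((t))$ we have $X \subseteq \tilde X$, so $\tilde X$ is infinite. Applying the henselian case to $L((t))$ yields some $n < \omega$ with $L((t))^{(p^n)} \subseteq (\tilde X)$. For each $y \in L$ we can therefore write $y^{p^n} = P(\bar x)/Q(\bar x)$ for some tuple $\bar x \in \tilde X^k$ and polynomials $P,Q$ over the prime field. The statement that such a tuple exists is an existential formula with parameters $\bar a, y \in L$, satisfied in $L((t))$; existential closure then provides a solution $\bar x \in X^k$, so $y^{p^n} \in (X) = E$, and $L^{(p^n)} \subseteq E$ with the same $n$.

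The main obstacle will be Step 1, and in particular the extraction of the correct exponent $n$. Concretely, it is delicate to track the inseparability degree when descending from the possibly high-dimensional component $W$ to a curve through a smooth $K$-point in such a way that the Hensel-lifting argument produces precisely $p^n$-th powers of an open set (rather than merely some open subset of $K^{(p^m)}$ for an uncontrolled $m$). Ensuring that the separable/inseparable decomposition of $K(W)/K(t)$ survives both desingularisation and the choice of slice, and matches up cleanly with a Frobenius-twisted form of Hensel's lemma, is where the substantive work of the paper should lie.
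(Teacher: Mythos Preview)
Your high-level strategy---prove the henselian case, then transfer via existential closure of $L$ in $L((t))$---matches the paper exactly, and your transfer argument (Step 2) is correct and essentially the paper's: the paper packages the same idea by first obtaining a uniform bound $K((t))^{(p^n)}\subseteq(X')_m$ and then intersecting with $L$, while you do it pointwise for each $y\in L$; both work.

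The henselian case, however, is handled quite differently. You propose a geometric route: choose an irreducible component $W$ of the presenting variety with infinite projection, read off the inseparability degree $p^n$ of $\pi_1\colon W\to\mathbb A^1$, pass to a curve through a smooth $K$-point, and Hensel-lift the separable part. The paper does \emph{not} do this. Instead it works model-theoretically: it introduces the notions of \emph{big} and \emph{uniformly big} subfields, shows (via the multivariate implicit function theorem and a separating transcendence base for a generic point over a carefully chosen base field $C$) that $E$ is uniformly big, and deduces that in a saturated extension $(K^*)^{(p^\infty)}/(E^{(*)})^{(p^\infty)}$ is finite separable. This produces an element $a\in(E^{(*)})^{(p^\infty)}$ transcendental over $C$; the exponent $n$ is then the least integer making $C(a^{p^{-n}},\mathbf b)/C(a^{p^{-n}})$ separable, after which Frobenius-twisting the implicit-function argument gives a ball in $K^{(p^n)}$ inside $(X)_m$.

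Your geometric approach faces genuine obstacles that the paper's method sidesteps. First, the irreducible components of $V$ need not be defined over the parameter field; the paper handles this by taking $C$ to be the relative algebraic closure in $K$ of the relative inseparable closure of the parameters, so that $K/C$ is regular and separating transcendence bases exist. Second, and more seriously, there is no reason an irreducible component with infinitely many $K$-points should have a \emph{smooth} $K$-point (over an imperfect field, regular and smooth differ, and desingularisation is delicate); the paper never needs a smooth point because it works with a \emph{generic} point in a saturated elementary extension and uses separability of $K/C$ in place of smoothness. Third, as you yourself flag, controlling the inseparability degree under slicing to a curve is exactly the hard part, and the paper's mechanism for extracting $n$---via the relative inseparable closure $\Lambda_K(C(a))=C(a^{p^{-n}}\mid n<\omega)$---is a different, more robust device than the function-field inseparability degree of $\pi_1$. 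So while your outline is in the right spirit, the substantive content of the henselian step is not what you anticipate.
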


The motivation for this work was the following result of Arno Fehm.

\begin{theorem}[Corollary 9, \cite{Fehm10}]\label{thm:Fehm}
A perfect large field $L$ has no existentially $L$-definable proper infinite subfields.
\end{theorem}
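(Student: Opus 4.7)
The plan, as suggested in the abstract, is to first prove a henselian analogue of the theorem and then to transfer it to $L$ using that largeness yields existential closedness of $L$ in $L((t))$.

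For the henselian analogue I would prove: if $(K,v)$ is a henselian non-trivially valued field and $X \subseteq K$ is infinite and existentially definable, then $X$ contains a set of the form $a + \{c^{p^n} : c \in K,\ v(c) \geq N\}$ for some $a \in K$, $N \in \mathbb{Z}$ and $n < \omega$, from which $K^{(p^n)} \subseteq (X)$ follows by a short scaling argument (write an arbitrary $e^{p^n}$ as $(c_0 e)^{p^n} / c_0^{p^n}$ for $c_0$ of very large valuation). Writing $X$ as the image of a constructible $V \subseteq K^{k+1}$ under a coordinate projection $\pi$, I would pass to an irreducible component of $V$ whose image under $\pi$ is infinite and pick a smooth $K$-point $P$ on this component; such a $P$ exists because, were all $K$-points singular, iterated restriction to the singular locus would eventually produce a $0$-dimensional variety with only finitely many $K$-points, contradicting infiniteness of $X$. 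Cutting by a generic hyperplane section through $P$ yields a curve $C$ through $P$ on which $P$ is still smooth and $\pi$ remains non-constant. I would then factor $\pi|_C = F^n \circ \sigma$, with $F$ the Frobenius and $\sigma: C \to \mathbb{A}^1$ a separable morphism; since $\sigma$ is étale away from finitely many closed points and henselianity guarantees that $C(K)$ fills out a $v$-adic neighborhood of $P$, I can find $P' \in C(K)$ near $P$ at which $\sigma$ is étale. Hensel's lemma applied at $P'$ forces $\sigma(C(K)) \supseteq \sigma(P') + B$ for some valuation ball $B$ about the origin, and applying $F^n$ together with the characteristic-$p$ identity $(u+w)^{p^n} = u^{p^n} + w^{p^n}$ gives the desired $\pi(C(K)) \supseteq \pi(P') + \{c^{p^n} : c \in B\}$.

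To deduce the theorem, I would apply the henselian result to $L((t))$ equipped with the $t$-adic valuation. If $X \subseteq L$ is infinite and defined by $\varphi(x, \bar a)$ with $\bar a \in L$, then the same formula defines a superset $X' \subseteq L((t))$, which is still infinite and existentially definable, and the henselian step produces $n < \omega$ with $L((t))^{(p^n)} \subseteq (X')$. For each $b \in L$, the assertion $b^{p^n} \in (X')$ unpacks to an existential sentence over $L$ (a specific number of witnesses satisfying $\varphi(\cdot,\bar a)$ together with a specific polynomial identity), and by the existential closedness of $L$ in $L((t))$ that largeness provides, the same sentence already holds in $L$; hence $b^{p^n} \in (X) = E$. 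Since the same $n$ works uniformly in $b$, this yields $L^{(p^n)} \subseteq E$.

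I expect the main obstacle to be the geometric step inside the henselian analogue: carrying out the reduction to a smooth curve $C$ through a smooth $K$-point, controlling the Frobenius factorization $\pi|_C = F^n \circ \sigma$ in positive characteristic so as to extract the correct exponent, and ensuring that an étale $K$-point is available inside the relevant $v$-adic neighborhood. The transfer step, by contrast, is essentially formal once the henselian statement and existential closedness of $L$ in $L((t))$ are in hand.
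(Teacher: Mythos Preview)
Your global architecture --- a henselian analogue followed by transfer via $L \preceq_{\exists} L((t))$ --- is exactly the paper's strategy, and your transfer step is essentially the argument of \autoref{prp:large}.

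The gap is in the henselian step, at the factorization $\pi|_{C} = F^{n} \circ \sigma$ with $F$ the Frobenius on the \emph{target} $\mathbb{A}^{1}$ and $\sigma: C \to \mathbb{A}^{1}$ separable. Over an imperfect base this factorization need not exist. Take $C = \{x_{1} = x_{2}^{p} + \alpha\}$ for some $\alpha \in K \setminus K^{(p)}$: then $K(C) = K(s)$ with $s = x_{2}$, and $\pi|_{C} = s^{p} + \alpha$ has vanishing differential, so $K(C)/K(\pi|_{C})$ is purely inseparable; yet $s^{p} + \alpha \notin K(C)^{(p)} = K^{(p)}(s^{p})$, so no $\sigma \in K(C)$ with $\sigma^{p} = \pi|_{C}$ exists. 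This situation genuinely arises in your construction: although the original parameters lie in the perfect field $L$, your hyperplane sections pass through a smooth $K$-point whose coordinates will typically lie in $L((t)) \setminus L((t))^{(p)}$, so the curve $C$ ends up defined only over $K$, not over $L$. (The factorization that \emph{does} always exist puts Frobenius on the source, $\pi|_{C} = \sigma \circ F_{C/K}^{n}$, but then $F_{C/K}^{n}(C(K)) \subsetneq C^{(p^{n})}(K)$ in general and the \'etale/Hensel argument on $C^{(p^{n})}$ no longer yields your ball-containment directly.) The paper avoids this altogether by never factoring $\pi$: it enlarges the parameter field to the relative inseparable closure $\Lambda_{K}(\mathbf{q})$ and its relative algebraic closure, so that $K$ becomes separable over the new base; it then chooses a separating transcendence base of a generic point, applies the multidimensional Implicit Function Theorem to the locus over that base, and only afterwards brings Frobenius in via explicit lemmas relating $\Phi\big(\pr_{x}\locus(a,\mathbf{b}/C)\big)$ to $\pr_{x}\locus(a^{p},\mathbf{b}/C)\cap K^{(p)}$. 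Your geometric picture can probably be repaired --- for instance by insisting the curve stay defined over the perfect field $L$, choosing the hyperplanes over $L$ and then separately locating a smooth $K$-point on the resulting $L$-curve --- but as written the inseparability coming from the imperfect ambient field $L((t))$ is not under control.
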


In fact, using our terminology, Fehm's method immediately shows that
{\em a perfect large field $L$ has no existentially generated proper subfields}.
For imperfect $L$ and each $n<\omega$, the subfield $L^{(p^{n})}$ is existentially definable, without parameters, by using the Frobenius map.
Moreover, if we use parameters then we are able to existentially define various extensions of the subfields $L^{(p^{n})}$.
Thus, our result generalises \autoref{thm:Fehm} by removing the assumption that $L$ is perfect.
On the other hand, if the characteristic of $L$ is zero, then $L$ is necessarily perfect, so Fehm's result already applies.

The key to our method is to study the same problem in a henselian field $K$, i.e.\ a field equipped with a nontrivial henselian valuation.
First, we recall some facts about separable field extensions in \autoref{section:preliminaries}.
Then in the context of an arbitrary field, in \autoref{section:big} we introduce and study `{\em big} subfields'; and in \autoref{section:uniformly.big} we introduce and study `{\em uniformly big} subfields'.
In \autoref{section:Egen.subfields}
we show that existentially generated subfields of henselian fields are uniformly big,
and that they contain `sufficiently many' points of $K^{(p^{\infty})}$.
From this we can deduce \autoref{thm:main.theorem}, restricted to henselian fields.
Finally, 
in \autoref{section:proof},
we use the fact that $L$ is existentially closed in the henselian field $L((t))$ to finish the proof of \autoref{thm:main.theorem}.


\subsection*{Notation}

Throughout, $C,E,F,K,L$ will denote fields, $C$ will usually be a subfield `of parameters', $K$ will be henselian, and $L$ will be large.
To avoid confusion between Cartesian products and powers, for $n<\omega$ and a set $X$, we let $X^{n}=X\times\ldots\times X$ denote the $n$-fold Cartesian product, and let $X^{(n)}=\{x^{n}|x\in X\}$ denote the set of $n$-th powers of elements from $X$.
Sometimes it will be convenient to think of tuples as being indexed by a tuple of variables.
If $\mathbf{x}=(x_{1},...,x_{m})$ is an $m$-tuple of variables, we write $X^{\mathbf{x}}$ for the set of $m$-tuples from $X$ indexed by $\mathbf{x}$.

Let $\mathbf{x}=(x_{1},...,x_{m})$, $\mathbf{y}=(y_{1},...,y_{n})$ be two tuples of variables.
Despite the abuse of language, we say $\mathbf{x}$ is a {\bf subtuple} of $\mathbf{y}$ if $\{x_{1},...,x_{m}\}\subseteq\{y_{1},...,y_{n}\}$.
In this case, we write
$\pr_{\mathbf{x}}:X^{\mathbf{y}}\longrightarrow X^{\mathbf{x}}$
for the {\bf projection} that maps each $\mathbf{y}$-tuple to its subtuple corresponding to $\mathbf{x}$.

Let $F/C$ be any field extension and let $\mathbf{a}\in F^{m}$.
We define the {\bf locus} of $\mathbf{a}$ in $F$ over $C$ to be the following:
\begin{align*}
\locus(\mathbf{a}/C)&:=\left\{\mathbf{b}\in F^{m}\;\Big|\;\forall f\in C[X_{1},...,X_{m}]\;\big(f(\mathbf{a})=0\implies f(\mathbf{b})=0\big)\right\}.
\end{align*}
In other language, this is the set of $F$-rational points of the smallest Zariski-closed subset of affine $m$-space, which is defined over $C$ and contains $\mathbf{a}$.
In more algebro-geometric contexts, a different notion of `locus' is used.
For example,
in \cite[Chapter II, Section 3]{LangAG}, Lang defines the locus $\mathbf{a}$ over $C$ to be the set of all specializations of $\mathbf{a}$, which forms a variety. 
Our locus is simply the set of $F$-rational points of this variety.
In the present paper, we are more interested in $F$-rational points of varieties than in varieties themselves, so there should be no confusion.

Suppose that $\mathbf{x}$ is a subtuple of $\mathbf{y}$.
Let $\mathbf{b}\in F^{\mathbf{y}}$, and let $\mathbf{a}$ be the subtuple of $\mathbf{b}$ corresponding to the variables $\mathbf{x}$.
Restricting the projection map, from above, we obtain
\begin{align*}
\locus(\mathbf{b}/C)\longrightarrow\locus(\mathbf{a}/C),
\end{align*}
which we also denote by $\pr_{\mathbf{x}}$.
Note that this will not be surjective, in general.


\section{Relative inseparable closure}
\label{section:preliminaries}

It is well-understood that if $F/C$ is a finite normal field extension, then there is a unique intermediate field $D$, such that $F/D$ is separably algebraic and $D/C$ is purely inseparable.
In \cite{DeveneyMordeson}, Deveney and Mordeson study an analogous problem in which we do not assume $F/C$ to be algebraic.
Before stating their result, we first recall some basic definitions.

\begin{definition}
A tuple $\mathbf{a}\subseteq F$ is a {\bf separating transcendence base} of an extension $F/C$ if $\mathbf{a}$ is algebraically independent over $C$ and $F/C(\mathbf{a})$ is separably algebraic.
\end{definition}

The following, which for our purposes we treat as a definition, is sometimes known as {\em Mac Lane's Criterion}.
For other equivalent statements and a more detailed development of the subject of separability of field extensions, see \cite{ML} and \cite[Chapter VIII, Proposition 4.1]{Lang}.

\begin{definition}
A field extension $F/C$ is {\bf separable} if every finite tuple $\mathbf{a}\subseteq F$ contains a separating transcendence base of $C(\mathbf{a})/C$.
\end{definition}

Now we come to the theorem of Deveney and Mordeson, which we re-word for our convenience.

\begin{fact}[cf {\cite[Theorem 1.1]{DeveneyMordeson}}]\label{fact:Deveney.Mordeson}
Let $F/C$ be a field extension.
Consider the set
\begin{align*}
\mathcal{S}_{F}(C)&:=\{E\subseteq F\;|\;\text{$F/E$ is separable and $C\subseteq E$}\}.
\end{align*}
Then $\mathcal{S}_{F}(C)$ has a minimum element, with respect to inclusion.
That is, there is $D\in\mathcal{S}_{F}(C)$ such that for all $E\in\mathcal{S}_{F}(C)$ we have $D\subseteq E$.
\end{fact}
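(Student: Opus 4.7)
The candidate for the minimum is the \emph{relative purely inseparable closure} of $C$ in $F$:
\[
D := \bigl\{x \in F \;\big|\; x^{p^{n}} \in C \text{ for some } n \geq 0\bigr\}.
\]
Routine verification shows $D$ is a subfield of $F$ containing $C$, and $D/C$ is purely inseparable algebraic.

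\emph{Minimality.} Given any $E \in \mathcal{S}_{F}(C)$ and $x \in D$, the relation $x^{p^{n}} \in C \subseteq E$ makes $x$ purely inseparable over $E$. Applying the definition of separability to the one-element tuple $(x)$---which is algebraic over $E$, so its only candidate separating transcendence base inside $(x)$ is the empty tuple---forces $E(x)/E$ to be separably algebraic. Thus $x$ is both separably algebraic and purely inseparable over $E$, so $x \in E$. Hence $D \subseteq E$, as required.

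\emph{Separability of $F/D$.} This is the main step, which I would approach via Mac Lane's criterion in the form that $F$ and $D^{1/p}$ are linearly disjoint over $D$ in a common overfield. The key algebraic fact is the identity $D^{1/p} \cap F = D$: any $y \in F$ with $y^{p} \in D$ satisfies $y^{p^{n+1}} \in C$, hence $y \in D$. Suppose for contradiction $F/D$ is not separable; then there exist $D$-linearly independent $\gamma_{1}, \ldots, \gamma_{n} \in F$ together with $y_{1}, \ldots, y_{n} \in D^{1/p}$, not all zero, satisfying $\sum_{i} y_{i}\gamma_{i} = 0$. Raising to the $p$-th power yields the relation $\sum_{i} y_{i}^{p}\gamma_{i}^{p} = 0$ inside $F$ with $y_{i}^{p} \in D$. \emph{The main obstacle} is to transform this into a $D$-linear dependence among the $\gamma_{i}$ themselves: if every $y_{i}^{p}$ has a $p$-th root in $F$, then each $y_{i}$ in fact lies in $F \cap D^{1/p} = D$, contradicting $D$-linear independence of the $\gamma_{i}$ directly; the remaining case, in which some $y_{i}^{p} \notin F^{p}$, requires choosing a witness with $n$ minimal and using a reduction argument to shorten the relation and reduce to the previous case.
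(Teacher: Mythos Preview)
Your candidate $D$---the purely inseparable closure of $C$ in $F$---is \emph{not} in general the minimum of $\mathcal{S}_{F}(C)$, because $F/D$ can fail to be separable. The paper does not give its own proof of this fact (it is quoted from Deveney--Mordeson), but your approach is genuinely incorrect, not merely incomplete.

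Here is a counterexample. Let $k$ be perfect of characteristic $p$, let $F=k(x,y,z)$ be a rational function field in three variables, and set $w:=y^{p}+xz^{p}$ and $C:=k(x,w)$. Then $\{x,w\}$ is a $p$-basis of $C$, but $w=y^{p}+z^{p}\cdot x\in F^{p}(x)$, so $\{x,w\}$ is not $p$-independent in $F$ and $F/C$ is \emph{not} separable. On the other hand, $D=C$: if $c\in F$ with $c^{p}\in C$, consider the derivation $\partial:=\partial/\partial x$ on $M:=k(x,y^{p},z^{p})$, which kills $F^{p}=k(x^{p},y^{p},z^{p})$ and hence kills $c^{p}$; writing $c^{p}=R(x,w)$ and using $\partial w=z^{p}$ with $z^{p}$ transcendental over $C$, one finds $R_{X}=R_{W}=0$, so $c^{p}\in k(x^{p},w^{p})=C^{p}$ and $c\in C$. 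Thus your $D$ equals $C$, yet $F/D$ is inseparable, so $D\notin\mathcal{S}_{F}(C)$.

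Your sketched ``reduction argument'' cannot be completed. In the example, the minimal failure of linear disjointness is the relation $1\cdot y+x^{1/p}\cdot z+(-w^{1/p})\cdot 1=0$ with $\{y,z,1\}\subseteq F$ linearly independent over $D$ and coefficients $1,x^{1/p},-w^{1/p}\in D^{1/p}$; here $n=3$ is already minimal (any length-$2$ relation would force a ratio into $F\cap D^{1/p}=D$), and two of the $y_{i}$ lie outside $F$, so there is no way to ``shorten'' into the case where all $y_{i}\in F$. The underlying issue is that $D^{1/p}\cap F=D$ is strictly weaker than linear disjointness of $F$ and $D^{1/p}$ over $D$. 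The Deveney--Mordeson minimum is a more delicate object and need not be purely inseparable (or even algebraic) over $C$.
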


\begin{definition}\label{def:relative.inseparable.closure}
Let $F/C$ be a field extension.
The {\bf relative inseparable closure} of $C$ in $F$, which we denote by $\Lambda_{F}(C)$,
is the minimum element of $\mathcal{S}_{F}(C)$.
If a tuple $\mathbf{q}\subseteq F$ generates a subfield $C=(\mathbf{q})$ of $F$,
then we say the {\bf relative inseparable closure} of $\mathbf{q}$ in $F$ is
$\Lambda_{F}(\mathbf{q}):=\Lambda_{F}(C)$.
\end{definition}



\begin{lemma}\label{lem:absoluteness}
Let $F\preceq F^{*}$ be an elementary extension.
Then $\Lambda_{F}(C)=\Lambda_{F^{*}}(C)$.
\end{lemma}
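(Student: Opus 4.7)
My plan is to leverage the following explicit description:
$$\Lambda_F(C) \;=\; \{x \in F \mid x^{p^n} \in C \text{ for some } n \geq 0\},$$
i.e.\ $\Lambda_F(C)$ is the purely inseparable closure of $C$ inside $F$. Once this characterisation is established, the lemma reduces to the classical fact that $F$ is relatively algebraically closed in $F^*$, which follows easily from $F \preceq F^*$.

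For the $\supseteq$ inclusion in the characterisation: any $x \in F$ with $x^{p^n} \in C$ is purely inseparable over $\Lambda_F(C) \supseteq C$, but $F/\Lambda_F(C)$ is separable and separable extensions contain no non-trivial purely inseparable elements, so $x \in \Lambda_F(C)$. For $\subseteq$: by \autoref{fact:Deveney.Mordeson}, $\Lambda_F(C)$ is contained in every $E \in \mathcal{S}_F(C)$, so it suffices to verify that the purely inseparable closure of $C$ in $F$ itself belongs to $\mathcal{S}_F(C)$, i.e.\ that $F$ is separable over it. This separability assertion is the essential technical content of Deveney--Mordeson's theorem (proven via a Mac-Lane-type linear-disjointness argument) and constitutes the principal obstacle in the approach.

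With the characterisation in hand, absoluteness is immediate. The hypothesis $F \preceq F^*$ implies that for each $f(X) \in F[X]$ the cardinality of the root set of $f$ is a first-order invariant over $F$, whence every root of $f$ that lies in $F^*$ already lies in $F$; in particular $F$ is algebraically closed in $F^*$. Now if $x \in F^*$ satisfies $x^{p^n} \in C \subseteq F$, then $x$ is a root of $X^{p^n} - x^{p^n} \in F[X]$, and so $x \in F$. Applying the characterisation inside $F^*$ therefore yields $\Lambda_{F^*}(C) \subseteq F$, and the equality $\Lambda_F(C) = \Lambda_{F^*}(C)$ follows at once from the defining condition, which depends only on $C$ and the ambient $p^n$-th power maps.
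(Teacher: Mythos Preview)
Your proposed characterisation
\[
\Lambda_F(C)=\{x\in F\mid x^{p^n}\in C\text{ for some }n\geq 0\}
\]
is \emph{false}, and this is where the argument breaks down. Take $C=\mathbb{F}_p(s,t)$ and let $F$ be the fraction field of $C[u,v]/(u^p-s-tv^p)$. Then $F/C$ is not separable: over $C^{1/p}$ the defining polynomial factors as $(u-s^{1/p}-t^{1/p}v)^p$, so the elements $1,s^{1/p},t^{1/p}\in C^{1/p}$, which are $C$-linearly independent, become $F$-linearly dependent via $-u\cdot 1+1\cdot s^{1/p}+v\cdot t^{1/p}=0$. Hence $\Lambda_F(C)\supsetneq C$. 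On the other hand, $C$ is relatively algebraically closed in $F$: writing $z=\sum_{i<p}a_i(v)u^i\in F$ and reducing modulo $w=u-s^{1/p}-t^{1/p}v$ in $F\otimes_C C^{1/p}$, one checks (by comparing coefficients in the $C$-basis $\{s^{k/p}t^{l/p}\}$ of $C^{1/p}$) that $z$ algebraic over $C$ forces $a_i=0$ for $i\geq 1$ and $a_0\in C$. So the purely inseparable closure of $C$ in $F$ is exactly $C$, strictly smaller than $\Lambda_F(C)$. In particular $\Lambda_F(C)$ need not even be algebraic over $C$, so the ``relative algebraic closedness of $F$ in $F^*$'' reduction you outline cannot work.

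You attribute the separability of $F$ over the purely inseparable closure to Deveney--Mordeson, but that is not what their theorem says: \autoref{fact:Deveney.Mordeson} guarantees a minimum element of $\mathcal{S}_F(C)$, not that this minimum coincides with the purely inseparable closure. The paper's own proof avoids any explicit description of $\Lambda_F(C)$ and instead argues directly with the minimality property: since $F\preceq F^*$ implies $F^*/F$ is separable, transitivity of separability gives $\mathcal{S}_F(C)\subseteq\mathcal{S}_{F^*}(C)$, whence $\Lambda_{F^*}(C)\subseteq\Lambda_F(C)\subseteq F$; and then $F/\Lambda_{F^*}(C)$ is separable as a subextension of $F^*/\Lambda_{F^*}(C)$, giving the reverse inclusion.
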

\begin{proof}
This follows from \autoref{fact:Deveney.Mordeson}, and the fact that if $F/E$ is separable then $F^{*}/E$ is separable, for any $E$.
\end{proof}

\begin{lemma}\label{lem:countable}
Let $\mathbf{q}\in F^{n}$, for some $n<\omega$.
Then $|\Lambda_{F}(\mathbf{q})|\leq\aleph_{0}$.
\end{lemma}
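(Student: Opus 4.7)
The plan is to use the downward Löwenheim--Skolem theorem together with \autoref{lem:absoluteness} to control the cardinality of $\Lambda_F(\mathbf{q})$. The key observation is that $\mathbf{q}$ is finite, so one can cleanly shrink $F$ to a countable elementary substructure containing $\mathbf{q}$ without changing the relative inseparable closure.

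First, apply downward Löwenheim--Skolem in the language of rings to obtain a countable elementary substructure $F_{0}\preceq F$ with $\mathbf{q}\in F_{0}^{n}$. Since $F_{0}$ is a subfield of $F$ and contains the coordinates of $\mathbf{q}$, it contains the finitely generated subfield $C=(\mathbf{q})$. Now invoke \autoref{lem:absoluteness} with the elementary extension $F_{0}\preceq F$ to conclude
\[
\Lambda_{F}(\mathbf{q})=\Lambda_{F}(C)=\Lambda_{F_{0}}(C)\subseteq F_{0}.
\]
Since $|F_{0}|\leq\aleph_{0}$, this bounds $|\Lambda_{F}(\mathbf{q})|\leq\aleph_{0}$, as required.

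There is no real obstacle here: \autoref{lem:absoluteness} does all the heavy lifting, and the rest is a standard cardinality reduction. The only point worth checking is that \autoref{lem:absoluteness} is symmetric in its application, i.e.\ that it applies equally well when one passes from $F$ down to a countable elementary substructure $F_{0}$ as when one passes up from $F_{0}$ to $F$; this is immediate from the statement, which is an equality rather than a one-sided containment.
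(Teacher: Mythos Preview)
Your proof is correct and is essentially the same as the paper's own argument: the paper also takes a countable elementary submodel $F_{*}\preceq F$ containing $\mathbf{q}$ and invokes \autoref{lem:absoluteness} to conclude $\Lambda_{F}(\mathbf{q})=\Lambda_{F_{*}}(\mathbf{q})$, hence countable. Your remark about the symmetric applicability of \autoref{lem:absoluteness} is apt, since the lemma is phrased for an elementary extension $F\preceq F^{*}$ but is used here with $F$ playing the role of the larger model.
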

\begin{proof}
There are several way of seeing this.
For example, let $F_{*}\preceq F$ be an elementary submodel with $\mathbf{q}\subseteq F_{*}$ and $|F_{*}|\leq\aleph_{0}$.
Of course it follows that $|\Lambda_{F_{*}}(\mathbf{q})|\leq\aleph_{0}$, and
in fact $\Lambda_{F}(\mathbf{q})=\Lambda_{F_{*}}(\mathbf{q})$,
by \autoref{lem:absoluteness}.
\end{proof}

For $n<\omega$, recall $F^{(p^{n})}$ is the subfield of $p^{n}$-th powers of elements of $F$.
We denote by
$$F^{(p^{\infty})}:=\bigcap_{n<\omega}F^{(p^{n})}$$
the largest perfect subfield of $F$.
In the proof of the next lemma, we use the notion of $p$-independence, introduced by Teichm\"{u}ller in \cite{T}, and later developed by Mac Lane, for example in \cite{ML}.
Since this lemma is the only place in the present paper that this notion appears, we do not give the definitions.

\begin{lemma}\label{lem:roots}
Let $F/C$ be separable and let $a\in F^{(p^{\infty})}$.
Then $\Lambda_{F}(C(a))=C(a^{p^{-n}}\;|\;n<\omega)$.
\end{lemma}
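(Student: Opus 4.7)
The plan is to prove the equality $\Lambda_F(C(a)) = D$, where I abbreviate $D := C(a^{p^{-n}} : n < \omega)$, by showing each containment.

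The inclusion $D \subseteq \Lambda_F(C(a))$ is the easier direction. Let $E \in \mathcal{S}_F(C(a))$. For each $n$, the element $a^{p^{-n}} \in F$ is a root of $X^{p^n} - a \in E[X]$, hence purely inseparable over $E$; and separability of $F/E$ forces any algebraic element of $F$ over $E$ to be separably algebraic. Being simultaneously purely inseparable and separably algebraic over $E$ gives $a^{p^{-n}} \in E$, so $D \subseteq E$; as $E$ was arbitrary, $D \subseteq \Lambda_F(C(a))$.

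The reverse inclusion requires showing $F/D$ is separable, which places $D$ in $\mathcal{S}_F(C(a))$. I plan to verify this using the tensor-product form of Mac Lane's criterion: $F/D$ is separable iff $F \otimes_D D^{1/p}$ is reduced, where $D^{1/p}$ denotes the field of $p$-th roots of elements of $D$ taken in a fixed algebraic closure of $F$, and similarly for $C^{1/p}$. The key identification will be
\[
F \otimes_D D^{1/p} \;\cong\; F \otimes_C C^{1/p},
\]
whose right-hand side is reduced because $F/C$ is separable. Securing this isomorphism requires two auxiliary facts: (i) $D/C$ is separable, so that $D$ and $C^{1/p}$ are linearly disjoint over $C$ and $D \otimes_C C^{1/p} \cong D \cdot C^{1/p}$ as $D$-algebras; and (ii) $D^{1/p} = D \cdot C^{1/p}$. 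Given (i) and (ii), associativity of the tensor product produces the displayed isomorphism.

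Claim (i) is straightforward: any finite tuple from $D$ lies in some $C(a^{p^{-n}})$, which is either purely transcendental over $C$ (if $a$ is transcendental over $C$) or equal to $C(a)$ (if $a$ is algebraic over $C$, since separability of $F/C$ forces the purely inseparable element $a^{p^{-n}}$ to already lie in $C(a)$). The main obstacle is (ii), which is precisely where the hypothesis $a \in F^{(p^\infty)}$ is indispensable. The nontrivial inclusion $\subseteq$ will be handled by taking $x$ with $x^p \in D$, writing $x^p = f(a^{p^{-n}})/g(a^{p^{-n}})$ for suitable $n$ and $f, g \in C[Y]$, and constructing a candidate $p$-th root by replacing each coefficient in $f, g$ with its unique $p$-th root in $C^{1/p}$ and each occurrence of $a^{p^{-n}}$ by $a^{p^{-n-1}}$; uniqueness of $p$-th roots in characteristic $p$ forces this candidate to equal $x$, placing $x$ in $C^{1/p}(a^{p^{-n-1}}) \subseteq D \cdot C^{1/p}$.
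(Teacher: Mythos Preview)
Your argument is correct. Both containments are handled soundly: the first by the standard ``separable $+$ purely inseparable $\Rightarrow$ trivial'' trick, and the second by reducing $F\otimes_D D^{1/p}$ to $F\otimes_C C^{1/p}$ via the two auxiliary claims, each of which you verify carefully (the explicit $p$-th root construction in (ii) is exactly where $a\in F^{(p^\infty)}$ enters, as you note).

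The paper's proof is in a different dialect: it uses $p$-independence rather than the tensor criterion. It takes a $p$-base $\mathbf{c}$ of $C$, observes that $\mathbf{c}$ is still a $p$-base of $D=C(a^{p^{-n}}:n<\omega)$, and then invokes separability of $F/C$ to conclude that $\mathbf{c}$ remains $p$-independent in $F$, which is one of the equivalent forms of separability of $F/D$. Underneath, the two proofs are doing the same work: your claim (ii), $D^{1/p}=D\cdot C^{1/p}$, is precisely the statement that a $p$-generating set for $C$ is still $p$-generating for $D$, and your claim (i) (linear disjointness of $D$ and $C^{1/p}$ over $C$) is the $p$-independence half. The $p$-base formulation is more compressed and assumes familiarity with that machinery; your tensor-product route is more self-contained and makes the role of the hypothesis $a\in F^{(p^\infty)}$ visibly explicit in the construction of the $p$-th root.
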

\begin{proof}
It is clear that $\Lambda_{F}(C(a))\supseteq C(a^{p^{-n}}\;|\;n<\omega)$.
For the reverse inclusion, by it suffices to show that some maximal $p$-independent tuple (i.e.\ a {\em $p$-base}) of $C(a^{p^{-n}}\;|\;n<\omega)$ is $p$-independent in $F$.
Let $\mathbf{c}\subseteq C$ be any $p$-base of $C$.
Then $\mathbf{c}$ is a $p$-base of $C(a^{p^{-n}}\;|\;n<\omega)$.
Since $F/C$ is separable, $\mathbf{c}$ is $p$-independent in $F$.
\end{proof}


\section{Big subfields}
\label{section:big}

In this section we introduce the {\em ad hoc} notion of `big subfields'.
Let $F/E$ be any field extension.
We show in \autoref{prp:big.subfields} that if $E$ is a big subfield of $F$ then $F^{(p^{\infty})}/E^{(p^{\infty})}$ is a finite separable extension.

\begin{definition}\label{def:big.subfields}
Let the {\bf algebraic exponent} of $F/E$,
which we denote by $\mathrm{algex}(F/E)$,
be the maximum of the degrees $[E(a):E]$, for $a\in F$, if this is finite.
If there is no maximum then we simply write $\mathrm{algex}(F/E)=\infty$.
We say that $E$ is a {\bf big subfield} of $F$ if $\mathrm{algex}(F/E)<\infty$.
\end{definition}

We denote by $EF^{(p^{\infty})}$ the compositum of $E$ and $F^{(p^{\infty})}$, taken inside the common extension $F$.

\begin{lemma}\label{lem:big.1}
If $E$ is a big subfield of $F$,
then $EF^{(p^{\infty})}/E$ is a finite separable extension of degree $\mathrm{algex}(EF^{(p^{\infty})}/E)$.
\end{lemma}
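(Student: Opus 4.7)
Set $M := E F^{(p^{\infty})}$ and $d := \mathrm{algex}(F/E)$. The plan is to establish three things: that $M/E$ is separable, that it is finite, and that its degree equals $\mathrm{algex}(M/E)$.

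I would begin by reducing separability of $M/E$ to the claim that every element of $F^{(p^{\infty})}$ is separable over $E$ (compositums of separable generators inside a fixed algebraic closure are separable). Given $x \in F^{(p^{\infty})}$, perfectness of $F^{(p^{\infty})}$ yields a sequence $x_{n} \in F^{(p^{\infty})}$ with $x_{n}^{p^{n}} = x$, so the chain $E(x_{0}) \subseteq E(x_{1}) \subseteq \cdots$ lies in $F$ with each term of degree at most $d$ over $E$ by the big subfield hypothesis. The degree sequence is non-decreasing and bounded, hence eventually constant, so for large $n$ the field $L := E(x_{n})$ contains $x_{n+k}$ for all $k \geq 0$; equivalently, $a := x_{n} \in L^{(p^{\infty})} = E(a)^{(p^{\infty})}$.

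The main obstacle is then to show that any $a$ algebraic over $E$ with $a \in E(a)^{(p^{\infty})}$ must be separable over $E$. I would argue by contradiction: if the minimal polynomial of $a$ over $E$ factors as $g(X^{p^{e}})$ with $g$ separable and $e \geq 1$, set $b := a^{p^{e}}$, so that $E(a)/E(b)$ is purely inseparable of degree $p^{e}$ with minimal polynomial $X^{p^{e}} - b$. Because this polynomial has vanishing derivative, the zero derivation of $E(b)$ extends to a well-defined nonzero $E(b)$-derivation $D$ of $E(a)$ with $D(a) = 1$. In characteristic $p$, Leibniz shows $D$ annihilates every $p$-th power, so if $a = w^{p}$ for some $w \in E(a)$ we would obtain $1 = D(a) = D(w^{p}) = 0$, a contradiction. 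Applying this to $a = x_{n}$ gives separability of $x = a^{p^{n}}$ over $E$, and hence of $M/E$.

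Finally, for finiteness and the degree computation, $M/E$ is separable algebraic with every element generating a subextension of degree at most $d$. Choose $a \in M$ maximising $s := [E(a):E]$, which exists since the degrees are bounded integers. For any $b \in M$ the primitive element theorem applied to the finite separable extension $E(a,b)/E$ gives $E(a,b) = E(c)$ for some $c$; then $s \leq [E(c):E] \leq d$, and maximality forces $E(c) = E(a)$, so $b \in E(a)$. Therefore $M = E(a)$ and $[M:E] = s = \mathrm{algex}(M/E)$ by construction. The separability step — extracting separability over $E$ from mere membership in $E(a)^{(p^{\infty})}$ via the derivation trick — is the real content; the remaining finiteness and degree assertions follow routinely.
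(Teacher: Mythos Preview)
Your proof is correct and shares the paper's overall architecture: establish that every element of $F^{(p^{\infty})}$ is separable over $E$, then use the primitive element theorem on a degree-maximising generator to conclude finiteness and identify the degree with $\mathrm{algex}(EF^{(p^{\infty})}/E)$. The second half of your argument is essentially identical to the paper's.

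For the separability step, however, you take a longer route than the paper. The paper argues directly: if $a\in F^{(p^{\infty})}$ were inseparable over $E$, then $a\notin E(a)^{(p)}$ (equivalently, $E\cdot E(a)^{(p)}\subsetneq E(a)$), so $[E(a^{p^{-1}}):E(a)]=p$; iterating gives $[E(a^{p^{-k}}):E]\geq p^{k}$, contradicting the bound $\mathrm{algex}(F/E)$. You instead first exploit the bound to stabilise the tower, extract $a:=x_{n}\in E(a)^{(p^{\infty})}$, and then invoke a derivation $D$ on $E(a)$ with $D(a)=1$ to reach the same contradiction $a\notin E(a)^{(p)}$. Your derivation trick is a perfectly valid (and pleasantly self-contained) way to see that inseparable elements are never $p$-th powers in the field they generate, but it is extra machinery: the paper's version is the contrapositive of yours and avoids both the stabilisation step and the derivation. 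What your approach buys is an explicit witness (the derivation) for the failure of $a\in E(a)^{(p)}$, which some readers may find more transparent than appealing to the criterion $L=KL^{(p)}$ for separability.
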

\begin{proof}
Since $E$ is a big subfield of $F$, $E$ is also a big subfield of $EF^{(p^{\infty})}$.
In particular, $EF^{(p^{\infty})}/E$ is algebraic.
Let $a\in F^{(p^{\infty})}$ and
suppose that $a$ is not separably algebraic over $E$.
Then $E(a^{p^{-1}})/E(a)$ is a purely inseparable extension of degree $p$.
In fact $E(a^{p^{-k}})/E(a)$ is a purely inseparable extension of degree $p^{k}$, for each $k<\omega$.
Since $a$ is an element of $F^{(p^{\infty})}$, the algebraic exponent of $EF^{(p^{\infty})}/E$ is infinite,
which contradicts our assumption.
Therefore $a$ is separably algebraic over $E$.
Since $a\in F^{(p^{\infty})}$ was arbitrary, this shows that
$EF^{(p^{\infty})}/E$ is a separable algebraic extension.

Let $N$ denote the algebraic exponent of $EF^{(p^{\infty})}/E$.
Note that $N\leq\mathrm{algex}(F/E)<\infty$.
Therefore there exists $b\in EF^{(p^{\infty})}$ such that $N=[E(b):E]$.
We claim that $E(b)=EF^{(p^{\infty})}$.
If not then there exists $c\in EF^{(p^{\infty})}\setminus E(b)$,
whence
$[E(b,c):E]>N$.
Since $E(b,c)/E$ is a finite separable extension, it is generated by a single element, $d$ say.
Therefore the degree $[E(d):E]$ is strictly greater than $N$, which contradicts our choice of $N$.
This shows that $E(b)=EF^{(p^{\infty})}$, as claimed.
In particular, $EF^{(p^{\infty})}/E$ is a finite separable extension of degree $N$.
\end{proof}

\begin{figure}[ht]
\begin{center}
$$\begindc{\commdiag}[250]
\obj(0,3)[E]{$E$}
\obj(3,2)[FP]{$F^{(p^\infty)}$}
\obj(3,4)[EFP]{$EF^{(p^\infty)}$}
\obj(3,6)[F]{$F$}
\mor{E}{EFP}{}[\atleft,\solidline]
\mor{FP}{EFP}{}[\atleft,\solidline]
\mor{EFP}{F}{}[\atleft,\solidline]
\enddc$$
\end{center}
\caption{Illustration of \autoref{lem:big.1}}
\label{fig:1}
\end{figure}

\begin{lemma}\label{fact:min.poly}
Let $F/E$ be a field extension.
Suppose that $a$ is an element in an extension of $F$ which is of degree $n$ both over $F$ and over $E$.
Then the minimal polynomial of $a$ over $F$ is an element of $E[x]$.
\end{lemma}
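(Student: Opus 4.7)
The plan is a short divisibility argument. Let $f \in F[x]$ be the minimal polynomial of $a$ over $F$ and let $g \in E[x]$ be the minimal polynomial of $a$ over $E$; both are monic, and by hypothesis both have degree $n$. Since $E \subseteq F$, we may view $g$ as an element of $F[x]$, and of course $g(a) = 0$. By the defining property of the minimal polynomial over $F$, it follows that $f$ divides $g$ in $F[x]$. But $f$ and $g$ are monic of the same degree, so $f = g$, and therefore $f \in E[x]$.

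There is no real obstacle here: the only thing worth emphasising is that the hypothesis is exactly what makes the degree comparison work. The inequality $[F(a):F] \leq [E(a):E]$ is automatic (any $E$-polynomial annihilating $a$ is also an $F$-polynomial annihilating $a$), but the reverse inequality is genuine input, and it is precisely this reverse inequality that upgrades the divisibility $f \mid g$ into the equality $f = g$.
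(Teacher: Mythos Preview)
Your proof is correct and is essentially identical to the paper's own argument: both take the two minimal polynomials, observe that the one over $F$ divides the one over $E$ in $F[x]$, and conclude equality from the fact that both are monic of the same degree. Your additional remark about which inequality is automatic and which is genuine input is a nice clarification, but the core argument matches the paper exactly.
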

\begin{proof}
Let $m_{1}\in F[x]$ and $m_{2}\in E[x]$ be the minimal polynomials.
Then $m_{1}$ divides $m_{2}$ in the polynomial ring $F[x]$.
Since they are both monic and of the same degree, they are equal.
\end{proof}

\begin{proposition}\label{prp:big.subfields}
If $E$ is a big subfield of $F$ then $F^{(p^\infty)}/E^{(p^{\infty})}$ is a finite separable extension of degree $[EF^{(p^{\infty})}:E]$.
\end{proposition}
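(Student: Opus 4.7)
The plan is to set $N:=[EF^{(p^\infty)}:E]$, which is finite by \autoref{lem:big.1}, and prove that $F^{(p^\infty)}/E^{(p^\infty)}$ is finite separable of degree exactly $N$. Separability is automatic, since $E^{(p^\infty)}$ is perfect and any algebraic extension of a perfect field is separable. The real work is to bound all the local degrees $[E^{(p^\infty)}(a):E^{(p^\infty)}]$ uniformly by $N$; after that, a primitive element argument combined with a second call to \autoref{lem:big.1} will force the degree to equal $N$.

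For the bound, fix $a\in F^{(p^\infty)}$ and, for each $k<\omega$, pick the (unique) $p^k$-th root $b_k\in F^{(p^\infty)}$ of $a$. Since $b_k\in EF^{(p^\infty)}$, \autoref{lem:big.1} gives $[E(b_k):E]\leq N$, and the $k$-th iterate of Frobenius is an isomorphism $E(b_k)\cong E^{(p^k)}(a)$, so $[E^{(p^k)}(a):E^{(p^k)}]\leq N$ for every $k$. Let $m_k\in E^{(p^k)}[x]$ be the minimal polynomial of $a$ over $E^{(p^k)}$ and write $d_k:=\deg m_k$. Since $m_{k+1}\in E^{(p^{k+1})}[x]\subseteq E^{(p^k)}[x]$ also vanishes at $a$, the minimal polynomial $m_k$ divides $m_{k+1}$ in $E^{(p^k)}[x]$, whence $d_k\leq d_{k+1}\leq N$. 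The sequence $(d_k)$ therefore stabilizes from some $k_0$ onwards, and equality of monic polynomials of equal degree forces $m_{k_0}=m_{k_0+1}=m_{k_0+2}=\cdots$; so $m_{k_0}$ lies in every $E^{(p^{k_0+j})}[x]$ and hence in $\bigcap_{j<\omega}E^{(p^j)}[x]=E^{(p^\infty)}[x]$, giving $[E^{(p^\infty)}(a):E^{(p^\infty)}]\leq d_{k_0}\leq N$. The main obstacle is precisely this passage from the family of bounds over each $E^{(p^k)}$ to a single bound over the intersection field $E^{(p^\infty)}$; without the stabilization of the $m_k$ one has no way to cross the limit $k\to\infty$ inside the polynomial ring.

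With the uniform bound in hand, $F^{(p^\infty)}/E^{(p^\infty)}$ is a separable algebraic extension with all local degrees $\leq N$. A routine argument then makes the whole extension finite: pick $c\in F^{(p^\infty)}$ with $[E^{(p^\infty)}(c):E^{(p^\infty)}]$ maximal, and for any $a\in F^{(p^\infty)}$ apply the primitive element theorem to $E^{(p^\infty)}(c,a)$ to rewrite it as $E^{(p^\infty)}(d)$; by maximality $E^{(p^\infty)}(d)=E^{(p^\infty)}(c)$, so $a\in E^{(p^\infty)}(c)$. Hence $F^{(p^\infty)}=E^{(p^\infty)}(c)$ has some finite degree $n\leq N$ over $E^{(p^\infty)}$. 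Then $EF^{(p^\infty)}=E(c)$, and since the minimal polynomial of $c$ over $E$ divides the one over the smaller field $E^{(p^\infty)}$, $N=[EF^{(p^\infty)}:E]=[E(c):E]\leq n$. The two inequalities collapse to $n=N$, which is the claim.
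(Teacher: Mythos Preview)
Your proof is correct and follows essentially the same approach as the paper: both bound $[E^{(p^\infty)}(a):E^{(p^\infty)}]$ by letting the degrees over the Frobenius chain $E^{(p^k)}$ stabilize and then observing that the stabilized minimal polynomial has all its coefficients in $\bigcap_k E^{(p^k)}=E^{(p^\infty)}$. The only noteworthy difference is the endgame for the exact degree: you reprove the primitive-element step of \autoref{lem:big.1} inline and compare degrees directly via the generator $c$ (using $[E(c):E]\leq[E^{(p^\infty)}(c):E^{(p^\infty)}]$), whereas the paper cites \autoref{lem:big.1} a second time and then invokes linear disjointness, using that $E/E^{(p^\infty)}$ is a regular extension --- your route is marginally more self-contained, the paper's is marginally shorter.
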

\begin{proof}
By \autoref{lem:big.1}, $EF^{(p^{\infty})}/E$ is a finite separable extension.
Let $N$ denote the degree $[EF^{(p^{\infty})}:E]$.
Since $E$ is a subfield of $F$, $E^{(p^{\infty})}$ is a subfield of $F^{(p^{\infty})}$.
Moreover, $F^{(p^{\infty})}/E^{(p^{\infty})}$ is a separable extension, since $E^{(p^{\infty})}$ is perfect.
Let $a\in F^{(p^\infty)}$.
Since $E$ is a big subfield of $EF^{(p^{\infty})}$,
the degrees
$\big([E(a^{p^{-k}}):E]\big)_{k<\omega}$
are each bounded by $N$.
Let $N_{0}\leq N$ denote the maximum
$$\mathrm{max}\big\{[E(a^{p^{-k}}):E]\;\big|\;k<\omega\big\},$$
and fix $k_{0}<\omega$ such that
$N_{0}=[E(a^{p^{-k_{0}}}):E]$.
Then for each $k<\omega$,
we have
$N_{0}=[E(a^{p^{-k-k_{0}}}):E]$;
and by applying the $k$-fold Frobenius isomorphism,
we obtain
$N_{0}=\big[E^{(p^{k})}(a^{p^{-k_{0}}}):E^{(p^{k})}\big]$.

Let $m\in E[x]$ be the minimal polynomial of $a^{p^{-k_{0}}}$ over $E$.
By \autoref{fact:min.poly} we have
$m\in E^{(p^{k})}[x]$.
Indeed, this holds for all $k<\omega$, and thus we obtain
$m\in E^{(p^{\infty})}[x]$.
Consequently, $m$ is the minimal polynomial of $a^{p^{-k_{0}}}$ over $E^{(p^{\infty})}$.
This shows that $a^{p^{-k_{0}}}$ is separably algebraic of degree $N_{0}$ over $E^{(p^{\infty})}$.
Thus $a$ is also separably algebraic of degree $\leq N_{0}$ over $E^{(p^{\infty})}$.
Since $a\in F^{(p^{\infty})}$ was arbitrary, 
$E^{(p^{\infty})}$ is a big subfield of $F^{(p^{\infty})}$.
Another application of \autoref{lem:big.1} shows that
$F^{(p^{\infty})}/E^{(p^{\infty})}$ is a finite separable extension.
Since $E/E^{(p^{\infty})}$ is a regular extension, it is linearly disjoint from
$F^{(p^{\infty})}/E^{(p^{\infty})}$.
Therefore
$[F^{(p^{\infty})}:E^{(p^{\infty})}]=[EF^{(p^{\infty})}:E]$,
as required.
\end{proof}

\begin{figure}[ht]
\begin{center}
$$\begindc{\commdiag}[250]
\obj(0,3)[E]{$E$}
\obj(3,2)[FP]{$F^{(p^\infty)}$}
\obj(3,4)[EFP]{$EF^{(p^\infty)}$}
\obj(3,6)[F]{$F$}
\obj(0,1)[EP]{$E^{(p^{\infty})}$}
\mor{E}{EFP}{$N$}[\atleft,\solidline]
\mor{FP}{EFP}{}[\atleft,\solidline]
\mor{EFP}{F}{}[\atleft,\solidline]
\mor{EP}{E}{}[\atleft,\solidline]
\mor{EP}{FP}{$N$}[\atright,\solidline]
\enddc$$
\end{center}
\caption{Illustration of \autoref{prp:big.subfields}}
\label{fig:2}
\end{figure}


\section{Uniformly big subfields}
\label{section:uniformly.big}

In this section we introduce `uniformly big subfields'.
eet $F$ be any field and let $\phi(x;\mathbf{q})$ be an $\mathcal{L}_{\mathrm{ring}}$-formula with one free-variable $x$ and a tuple $\mathbf{q}\in F^{\mathbf{y}}$ of parameters.
Denote by $X=\phi(F;\mathbf{q})$ the subset of $F$ defined by $\phi(x;\mathbf{q})$, and let $E=(X)$ be the subfield of $F$ which is generated by $X$.
For an elementary extension $F\preceq F^{*}$, we denote by $X^{*}=\phi(F^{*};\mathbf{q})$ the set defined in $F^{*}$ by $\phi(x;\mathbf{q})$,
and we denote by $E^{(*)}:=(X^{*})$ the subfield of $F^{*}$ generated by $X^{*}$.\footnote{The superscript `$(*)$' is intended to indicate that $E^{(*)}$ depends on $\phi(x;\mathbf{q})$ rather than on $E$.
For example, the subfields generated by the sets defined by the formulas $x=1$ and $x=x$ coincide for $\mathbb{Q}$ but not for $\mathbb{Q}^{*}\succ\mathbb{Q}$.}
Note that neither $E$ nor $E^{(*)}$ is assumed to be definable.
However, each is the union of a chain of definable sets, as follows.

Fix an enumeration $(f_{i})_{i<\omega}$ of all the multivariable polynomials over the prime field.
For convenience, we arrange the enumeration so that $f_{i}$ is a polynomial in (at most) the variables $X_{0},...,X_{i-1}$.
For $m<\omega$,
we let $\phi_{m}(x;\mathbf{q})$ denote the formula
\begin{align*}
\exists\;
\mathbf{a}=(a_{k})_{k<m},\mathbf{b}=(b_{l})_{l<m}\;:\;
\bigvee_{i,j<m}\bigg(
x\cdot f_{i}(\mathbf{a})=f_{j}(\mathbf{b})
\wedge
f_{i}(\mathbf{a})\neq0
\wedge\bigwedge_{k,l<m}\Big(\phi(a_{k};\mathbf{q})\wedge\phi(b_{l};\mathbf{q})\Big)\bigg).
\end{align*}
Note that if $\phi(x;\mathbf{q})$ is an existential formula, then $\phi_{m}(x;\mathbf{q})$ is also (equivalent to) an existential formula, for each $m<\omega$.
In $F$, $\phi_{m}(x;\mathbf{q})$ defines the union of images of $X$ under the rational functions $\frac{f_{i}}{f_{j}}$, for $i,j<m$.
More precisely, we have
\begin{align*}
\phi_{m}(F;\mathbf{q})=\bigg\{\frac{f_{i}(\mathbf{a})}{f_{j}(\mathbf{b})}\;\bigg|\;\mathbf{a},\mathbf{b}\subseteq\phi(F;\mathbf{q}),f_{j}(\mathbf{b})\neq0,\text{ and }i,j<m,\bigg\}.
\end{align*}
We write $X_{m}:=\phi_{m}(F;\mathbf{q})$, and similarly $X_{m}^{*}:=\phi_{m}(F^{*};\mathbf{q})$.
Therefore the field $E=(X)$ is the union $\bigcup_{m<\omega}X_{m}$, and the field $E^{(*)}=(X^{*})$ is the union $\bigcup_{m<\omega}X^{*}_{m}$.

\begin{definition}\label{def:uniformly.big.subfields}
We say that $E=(X)$ is a {\bf uniformly big subfield} of $F$ if $E^{(*)}=(X^{*})$ is a big subfield of $F^{*}$, for all elementary extensions $F\preceq F^{*}$.
\end{definition}

Note that being `uniformly big' is not strictly a property of the subfield $E$, but rather a property of a choice of definable generating set $X$.
This slight ambiguity will not cause a problem.

Let $\mathbf{x}=(x_{1},...,x_{n})$ be an $n$-tuple of variables.
For $m,n<\omega$, we let $\delta_{m,n}(\mathbf{x};\mathbf{q})$ be the formula
\begin{align*}
\exists\;
\mathbf{a}=(a_{i})_{i<m^{n}}\;:\;
\bigg(\bigwedge_{i<m^{n}}\phi_{m}(a_{i};\mathbf{q})
\wedge\neg\bigwedge_{i<m^{n}}a_{i}=0
\wedge\sum_{\sum_{j}i_{j}<m}a_{i}x_{1}^{i_{1}}...x_{n}^{i_{n}}=0\bigg).
\end{align*}
Again, note that if $\phi(x;\mathbf{q})$ is an existential formula, then $\delta_{m,n}(\mathbf{x};\mathbf{q})$ is also (equivalent to) an existential formula, for each $m,n<\omega$.
In $F$, $\delta_{m,n}(\mathbf{x};\mathbf{q})$ defines the set of those $n$-tuples which are zeroes of some nontrivial polynomial of total degree $<m$ with coefficients from $X_{m}$.
The set
$r_{n}(\mathbf{x}):=\{\neg\delta_{m,n}(\mathbf{x};\mathbf{q})\;|\;m<\omega\}$
is a (partial) $n$-type over $\mathbf{q}$.
An $n$-tuple $\mathbf{a}\in F^{n}$ realises $r_{n}(\mathbf{x})$ if and only if $\mathbf{a}$ is algebraically independent over $E=(X)$.


\begin{proposition}\label{prp:uniform.equivalences}
The following are equivalent:
\begin{enumerate}
\item $E=(X)$ is a uniformly big subfield of $F$;
\item there exists $m<\omega$ such that
$F\;\models\;\forall x_{1}\;\delta_{m,1}(x_{1};\mathbf{q})$; and
\item $\mathrm{trdeg}(F/E)$ is `bounded',
i.e.\ there is a cardinal $\kappa$ such that for all elementary extensions $F\preceq F^{*}$ we have
$$\mathrm{trdeg}(F^{*}/E^{(*)})\leq\kappa.$$
\end{enumerate}
\end{proposition}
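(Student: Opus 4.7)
I would prove the cycle $(2)\Rightarrow(1)\Rightarrow(3)\Rightarrow(2)$. The first implication is immediate from elementarity: the sentence $\forall x_1\,\delta_{m,1}(x_1;\mathbf{q})$ transfers from $F$ to any elementary extension $F^{*}$, so every $a\in F^{*}$ has degree $<m$ over $E^{(*)}$ and $\mathrm{algex}(F^{*}/E^{(*)})<m$. The implication $(1)\Rightarrow(3)$ is trivial: being big forces $F^{*}/E^{(*)}$ to be algebraic, hence $\mathrm{trdeg}(F^{*}/E^{(*)})=0$ and any $\kappa$ works.

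The content lies in $(3)\Rightarrow(2)$, which I would prove by contrapositive. Assume $(2)$ fails, so for every $m<\omega$ there is $a_m\in F$ with $F\models\neg\delta_{m,1}(a_m;\mathbf{q})$. The key technical step is the following lemma: \emph{for every $M,N<\omega$ there is $\mathbf{a}\in F^{N}$ with $F\models\neg\delta_{M,N}(\mathbf{a};\mathbf{q})$.} To prove it, set $M'=M^{N+1}$, take $a=a_{M'}$, and consider the $N$-tuple $\mathbf{a}=(a^{M},a^{M^{2}},\ldots,a^{M^{N}})$. For any nonzero $f=\sum_{\alpha}c_{\alpha}X_{1}^{\alpha_1}\cdots X_{N}^{\alpha_N}\in X_M[X_1,\ldots,X_N]$ of total degree $<M$, evaluation at $\mathbf{a}$ gives $f(\mathbf{a})=g(a)$ where $g(Y):=\sum_{\alpha}c_{\alpha}Y^{e(\alpha)}$ with $e(\alpha):=\sum_{i=1}^{N}\alpha_{i}M^{i}$. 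Since $|\alpha|<M$ forces each $\alpha_i<M$, the integers $e(\alpha)$ are the base-$M$ expansions of distinct naturals, so $g$ is a nonzero polynomial of degree $<M^{N+1}=M'$ with coefficients in $X_M\subseteq X_{M'}$. If $f(\mathbf{a})=0$ then $g(a)=0$, contradicting $\neg\delta_{M',1}(a;\mathbf{q})$.

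With the lemma in hand, I finish by a compactness argument. Adjoin new constants $\{c_\alpha:\alpha<\kappa^{+}\}$ to the elementary diagram of $F$, together with the axioms $\neg\delta_{m,n}(c_{\alpha_1},\ldots,c_{\alpha_n};\mathbf{q})$ for all $m,n<\omega$ and $\alpha_1<\ldots<\alpha_n<\kappa^{+}$. A finite fragment mentions finitely many $c_\alpha$'s (say $K$) and has bounded $m,n\leq M$; noting that $\neg\delta_{M,K}(\mathbf{a})$ implies $\neg\delta_{m,n}$ on every subtuple for $m\leq M$, $n\leq K$ (by padding the offending polynomial with dummy variables), one interprets the $K$ new constants as the coordinates of the tuple supplied by the lemma. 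Any model of the resulting theory is an elementary extension $F^{*}\succeq F$ in which every finite subset of $\{c_\alpha\}$ realises the corresponding $r_n$-type, so by the observation preceding the proposition the whole set is algebraically independent over $E^{(*)}$. This gives $\mathrm{trdeg}(F^{*}/E^{(*)})\geq\kappa^{+}>\kappa$, contradicting $(3)$.

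The principal obstacle is the key lemma: one must somehow convert a single-variable witness of $\neg\delta_{M',1}$ into an $N$-variable witness of $\neg\delta_{M,N}$, and there seems to be no way to do this without a clever collapse of multivariate polynomials to univariate ones. The substitution $X_i\mapsto a^{M^i}$ combined with the base-$M$ uniqueness of digit expansions is the encoding that accomplishes this; the rest of the argument is routine.
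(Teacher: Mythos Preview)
Your proof is correct and follows the same overall cycle $(2)\Rightarrow(1)\Rightarrow(3)\Rightarrow(2)$ as the paper, with identical arguments for the two easy implications. The difference lies in the contrapositive $\neg(2)\Rightarrow\neg(3)$.

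The paper proceeds by induction on $n$ to show each $r_{n}$ is consistent: assuming $r_{n}$ is realised by $(a_{1},\ldots,a_{n})$ in some elementary extension $F^{*}$, it observes that for each $m$ the tuple $(a_{1}^{m},\ldots,a_{n}^{m},a_{n})$ witnesses $\neg\delta_{m,n+1}$ in $F^{*}$, since $a_{n}$ has degree exactly $m$ over $E^{(*)}(a_{1}^{m},\ldots,a_{n}^{m})$. Your argument instead produces, for each $M,N$, a witness to $\neg\delta_{M,N}$ directly in $F$ by the base-$M$ encoding $X_{i}\mapsto a^{M^{i}}$ applied to a single witness $a$ of $\neg\delta_{M^{N+1},1}$. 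Your route is somewhat more self-contained: the key lemma is proved entirely inside $F$ with no passage to elementary extensions, and it handles all $N$ uniformly rather than inductively. The paper's route is a bit more conceptual, working with genuine algebraic independence in a saturated-enough model rather than with an explicit polynomial encoding. Both then finish with the same compactness argument over $\kappa^{+}$ many constants (the paper leaves this step implicit in the phrase ``by compactness it suffices to show that $r_{n}$ is consistent for each $n$'').
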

\begin{proof}
$(1\implies3)$: Suppose that $E=(X)$ is a uniformly big subfield of $F$,
and let $F\preceq F^{*}$ be any elementary extension.
Then $E^{(*)}=(X^{*})$ is a big subfield of $F^{*}$, and in particular $F^{*}/E^{(*)}$ is algberaic,
i.e.\ $\mathrm{trdeg}(F^{*}/E^{(*)})\leq0$.
Thus even $0$ is the required bound.

$(2\implies1)$: Suppose that $F\models\forall x_{1}\;\delta_{m,1}(x_{1})$.
Let $F\preceq F^{*}$ be any elementary extension and let $a\in F^{*}$.
Then $F^{*}\models\delta_{m,1}(a;\mathbf{q})$, so $a$ is the zero of a non-trivial polynomial of degree $<m$ with coefficients from
$X_{m}^{*}=\phi_{m}(F^{*};\mathbf{q})$.
In particular, $a$ is algebraic of degree $<m$ over $E^{(*)}=(X^{*})$.
This shows that
$\mathrm{algex}(F^{*}/E^{(*)})<m$,
and so $E^{(*)}$ is a big subfield of $F^{*}$.
Since $F^{*}$ is an arbitrary elementary extension of $F$, this shows that $E$ is a uniformly big subfield of $F$.

$(3\implies 2)$: in fact we show the contrapositive $(\neg2\implies\neg3)$.
This is a standard compactness argument.
Indeed, by compactness, to show the negation of $(3)$ it suffices to show that $r_{n}(x_{1},...,x_{n})$ is consistent, for each $n<\omega$.
We suppose the negation of $(2)$,
i.e.\ for each $m<\omega$ we have
$F\;\models\;\neg\forall x_{1}\;\delta_{m,1}(x_{1};\mathbf{q})$.
By compactness it follows that $r_{1}(x_{1})$ is consistent.
We proceed by induction, and the consistency of $r_{1}(x_{1})$ is the base case.

Suppose that $r_{n}(x_{1},...,x_{n})$ is consistent.
Then there exists an elementary extension $F\preceq F^{*}$ and an $n$-tuple $(a_{1},...,a_{n})$ in $F^{*}$ that realises
the type $r_{n}(x_{1},...,x_{n})$.
That is, $(a_{1},...,a_{n})$ is algebraically independent over $E^{(*)}$.
Let $m<\omega$.
Trivially, $(a_{1}^{m},...,a_{n}^{m})$ is also algebraically independent over $E^{(*)}$.
Moreover, $a_{n}$ is of degree exactly $m$ over $E^{(*)}(a_{1}^{m},...,a_{n}^{m})$.
In particular $(a_{1}^{m},...,a_{n}^{m},a_{n})$ is not the zero of any nontrivial polynomial of total degree $<m$ with coefficients from $E^{(*)}$.
Since $X_{m}^{*}\subseteq E^{(*)}$, it follows
{\em a fortiori}
that 
$(a_{1}^{m},...,a_{n}^{m},a_{n})$
is not
a zero of any nontrivial polynomial of total degree $<m$ with coefficients from $X_{m}^{*}$;
i.e.\ we have
$$F^{*}\;\models\;\neg\delta_{m,n+1}(a_{1}^{m},...,a_{n}^{m},a_{n};\mathbf{q}).$$
It follows that the type $r_{n+1}(x_{1},...,x_{n+1})$ is consistent, as required.
By induction, $r_{n}(x_{1},...,x_{n})$ is consistent, for all $n<\omega$.
As indicated above, this entails the negation of $(3)$, as required.
\end{proof}

Finally for this section, we give a straightforward lemma that will be used in the proof of \autoref{prp:Egen.uniformly.big}.

\begin{lemma}\label{lem:monotone}
Let $F\preceq F^{*}$ be an elementary extension.
Then $\mathrm{trdeg}(F/E)\leq\mathrm{trdeg}(F^{*}/E^{(*)})$.
\end{lemma}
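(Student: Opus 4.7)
The plan is to exploit the crucial feature of the formulas $\delta_{m,n}(\mathbf{x};\mathbf{q})$ built in the previous section: they involve only the parameters $\mathbf{q}$, not the elements of $X$ themselves. As a result, the partial type $r_{n}(\mathbf{x})$ is a genuine first-order partial type with parameters from $F$ (namely $\mathbf{q}$), whose realisations in $F$ are exactly the $n$-tuples that are algebraically independent over $E=(X)$, and whose realisations in $F^{*}$ are exactly the $n$-tuples that are algebraically independent over $E^{(*)}=(X^{*})$. Elementarity then transfers algebraic independence from $F$ to $F^{*}$ \emph{despite} the replacement of $E$ by the possibly larger field $E^{(*)}$.

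More concretely, I would fix a transcendence basis $\mathbf{a}$ of $F/E$ (of any cardinality), and consider an arbitrary finite subtuple $\mathbf{a}'=(a_{i_{1}},\ldots,a_{i_{n}})$ of $\mathbf{a}$. By definition of transcendence basis, $\mathbf{a}'$ is algebraically independent over $E$ in $F$, which, by the discussion immediately preceding \autoref{prp:uniform.equivalences}, is equivalent to
$$F\;\models\;\neg\delta_{m,n}(\mathbf{a}';\mathbf{q}) \quad\text{for every }m<\omega,$$
i.e.\ to $F\models r_{n}(\mathbf{a}';\mathbf{q})$. Since each $\neg\delta_{m,n}(\mathbf{x};\mathbf{q})$ is a first-order formula with parameters $\mathbf{q}\subseteq F$, and since $F\preceq F^{*}$, the same formulas hold of $\mathbf{a}'$ in $F^{*}$; that is, $F^{*}\models r_{n}(\mathbf{a}';\mathbf{q})$. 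But by the very same equivalence applied inside $F^{*}$, this says precisely that $\mathbf{a}'$ is algebraically independent over $E^{(*)}=(X^{*})$.

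Since algebraic independence is witnessed by finite subtuples, the whole tuple $\mathbf{a}$ is algebraically independent over $E^{(*)}$ in $F^{*}$. Consequently
$$\mathrm{trdeg}(F/E)=|\mathbf{a}|\leq\mathrm{trdeg}(F^{*}/E^{(*)}),$$
which is the desired inequality. There is really no substantive obstacle here; the work has already been done in setting up $\phi_{m}$ and $\delta_{m,n}$ so that algebraic independence over $E$ (and over $E^{(*)}$) becomes a first-order condition using only the parameters $\mathbf{q}$. The only mild care needed is to handle the possibly infinite cardinality of $\mathbf{a}$, which is done, as above, by reducing to finite subtuples.
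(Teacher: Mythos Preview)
Your argument is correct and is essentially the same as the paper's: both use that $\neg\delta_{m,n}(\mathbf{x};\mathbf{q})$ is a first-order formula over $\mathbf{q}$ whose satisfaction in $F$ (resp.\ $F^{*}$) characterises algebraic independence over $E$ (resp.\ $E^{(*)}$), and then transfer via elementarity. The paper simply states this for an arbitrary finite tuple $\mathbf{a}\in F^{n}$, whereas you additionally spell out the reduction from a possibly infinite transcendence basis to its finite subtuples.
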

\begin{proof}
Let $\mathbf{a}\in F^{n}$ be algebraically independent over $E$.
Then we have $F\;\models\;\neg\delta_{m,n}(\mathbf{a};\mathbf{q})$, for all $m<\omega$.
Thus also $F^{*}\;\models\;\neg\delta_{m,n}(\mathbf{a};\mathbf{q})$, for all $m<\omega$;
and therefore $\mathbf{a}$ is algebraically independent over $E^{(*)}$.
\end{proof}



\section{Existentially generated subfields of henselian fields}
\label{section:Egen.subfields}

Let $(K,v)$ be an henselian nontrivially valued field, with value group $\Gamma_{v}$, and let $C\subseteq K$ be a subfield such that $K/C$ is separable.

\subsection{The henselian topology}
\label{section:henselianity}

In \cite{PZ}, Prestel and Ziegler study topological fields.
In particular, they study those topologies induced by nontrivial valuations, as follows.

Let $a\in K$ and let $\alpha\in\Gamma_{v}$.
We define the {\bf ball} of radius $\alpha$ around $a$ to be
$B(\alpha;a):=\{x\in K\;|\;v(x-a)>\alpha\}$.
The collection $\{B(\alpha;a)\;|\;a\in K,\alpha\in\Gamma_{v}\}$ forms a base for the open sets of the {\bf valuation topology} induced by $v$, which is a field topology on $K$.
For tuples $\mathbf{a}=(a_{1},...,a_{n})\in K^{n}$ and $\bfalpha=(\alpha_{1},...,\alpha_{n})\in\Gamma_{v}^{n}$,
we defined the {\bf ball} of radius $\bfalpha$ around $\mathbf{a}$ to be
$$B(\bfalpha;\mathbf{a}):=\prod_{i=1}^{n}B(\alpha_{i};a_{i}).$$

In Section 7 of \cite{PZ}, Prestel and Ziegler study topological consequences of henselianity.
Specifically, they shows that
`topological henselianity'
is equivalent to the topology satisfying the `Implicit Function Theorem', in the context of the continuity of functions defined implicitly by the vanishing of polynomials.
We refer the reader to \cite{PZ} for more details.
For the present, we simply give the following fact.

\begin{fact}[{cf \cite[(7.4) Theorem]{PZ}}]\label{fact:IFT}
For all $f\in K[X_{1},...,X_{n},Y]$ and all $(\mathbf{a},b)\in K^{n+1}$,
if $f(\mathbf{a},b)=0$ and $f'(\mathbf{a},b)\neq0$,
then there exist $(\bfalpha,\beta)\in\Gamma_{v}^{n+1}$ such that
for all $\mathbf{a}'\in B(\bfalpha;\mathbf{a})$ there exists a unique $b'\in B(\beta;b)$ such that
$f(\mathbf{a},b)=0$.
Moreover, the map
$\mathbf{a}\longmapsto b'$
is continuous.
\end{fact}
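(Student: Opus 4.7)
The plan is to deduce this from the strong form of Hensel's lemma: if $g\in K[Y]$ satisfies $v(g(b))>2v(g'(b))$, then there exists a unique $b'\in K$ with $v(b'-b)>v(g'(b))$ and $g(b')=0$. This is the version of henselianity most directly adapted to an implicit function theorem.

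First I would fix notation: let $f_Y\in K[X_1,\ldots,X_n,Y]$ denote the formal partial derivative of $f$ in $Y$, and set $\gamma:=v(f_Y(\mathbf{a},b))\in\Gamma_v$, which is an element (not $\infty$) because $f_Y(\mathbf{a},b)=f'(\mathbf{a},b)\neq0$. The maps induced by $f$ and by $f_Y$ are continuous in the valuation topology, since polynomial maps are built up from the continuous field operations. Using $f(\mathbf{a},b)=0$ and continuity of $f$, one can choose a first radius $\bfalpha^{(1)}\in\Gamma_v^n$ so that $v(f(\mathbf{a}',b))>2\gamma$ for every $\mathbf{a}'\in B(\bfalpha^{(1)};\mathbf{a})$; using continuity of $f_Y$ together with the ultrametric inequality, one chooses $\bfalpha^{(2)}$ so that $v(f_Y(\mathbf{a}',b)-f_Y(\mathbf{a},b))>\gamma$, whence $v(f_Y(\mathbf{a}',b))=\gamma$. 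Take $\bfalpha$ to be the componentwise maximum of $\bfalpha^{(1)}$ and $\bfalpha^{(2)}$.

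For such $\mathbf{a}'$, apply the strong Hensel's lemma to the one-variable polynomial $g(Y):=f(\mathbf{a}',Y)\in K[Y]$ at the point $b$: the hypothesis $v(g(b))>2\gamma=2v(g'(b))$ is exactly what is needed, and it produces a unique $b'\in K$ with $v(b'-b)>\gamma$ and $f(\mathbf{a}',b')=0$. Setting $\beta:=\gamma$ gives the required existence and uniqueness conclusion inside $B(\beta;b)$.

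For the continuity claim, I would argue as follows. Given any $\beta'\geq\beta$, I want to show that some ball $B(\bfalpha';\mathbf{a})$ is mapped into $B(\beta';b)$. A sharpening of the same Hensel estimate gives that if in addition $v(f(\mathbf{a}',b))>\gamma+\beta'$, then the unique $b'$ produced satisfies $v(b'-b)>\beta'$; by continuity of $f$ this extra condition is achieved by shrinking $\bfalpha$ further. Continuity at points other than $\mathbf{a}$ is automatic, since the same construction applies with $(\mathbf{a}',b')$ in place of $(\mathbf{a},b)$, noting that the new value $v(f_Y(\mathbf{a}',b'))$ still equals $\gamma$ by our choice of $\bfalpha$. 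The main delicate point is simply keeping the inequalities $v(g(b))$, $v(g'(b))$, $v(b'-b)$ coordinated across the two uses of Hensel's lemma; there is no deep obstacle, just careful bookkeeping, since everything reduces to the standard valuation-theoretic Hensel's lemma combined with polynomial continuity.
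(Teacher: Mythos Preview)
Your argument is correct. The paper's own proof is a one-line citation: it invokes \cite[(7.4) Theorem]{PZ}, which shows that for a $V$-topological field the Implicit Function Theorem is equivalent to the topology being \emph{t-henselian}, and then observes that the topology induced by a nontrivial henselian valuation is indeed t-henselian. So the paper treats the statement as a black box.

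What you have written is essentially the content of that black box, or at least of the direction ``henselian $\Rightarrow$ IFT'': you reduce to the one-variable strong Hensel's lemma by freezing the parameters $\mathbf{a}'$, using continuity of $f$ and $f_Y$ to arrange $v(f(\mathbf{a}',b))>2\gamma$ and $v(f_Y(\mathbf{a}',b))=\gamma$, and then read off existence, uniqueness, and the estimate $v(b'-b)=v(f(\mathbf{a}',b))-\gamma$ needed for continuity. This is exactly the standard valuation-theoretic proof, and it has the advantage of being self-contained rather than relying on the Prestel--Ziegler framework of topological henselianity. The only point where your sketch is slightly casual is the claim that $v(f_Y(\mathbf{a}',b'))=\gamma$ persists at the new centre; this does hold, and is part of the usual Newton-iteration proof of strong Hensel (at each step the value of the derivative is preserved because the increment has value $>\gamma$), so your ``careful bookkeeping'' remark is accurate.
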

\begin{proof}
This follows from \cite[(7.4) Theorem]{PZ} since the topology induced by the a henselian valuation is indeed a henselian topology.
\end{proof}

In \cite[Section 4]{K2}, Kuhlmann presents a multidimensional version of the Implicit Function Theorem, which we rewrite very slightly for our convenience.

\begin{fact}[{cf \cite[Section 4]{K2}}]\label{fact:MIFT}
Let $(K,v)$ be a henselian valued field.
Let $f_{1},...,f_{n}\in K[X_{1},...,X_{l}]$ with $n<l$.
Set
$$\tilde{J}:=
\begin{pmatrix}
\frac{\partial f_{1}}{\partial X_{l-n+1}}&\ldots&\frac{\partial f_{1}}{\partial X_{l}}\\
\vdots& & \vdots\\
\frac{\partial f_{n}}{\partial X_{l-n+1}}&\ldots&\frac{\partial f_{n}}{\partial X_{l}}
\end{pmatrix}.
$$
Assume that $f_{1},...,f_{n}$ admit a common zero $\mathbf{a}=(a_{1},...,a_{l})\in K^{l}$
and that $\det\tilde{J}(\mathbf{a})\neq0$.
Then there is some $\alpha\in\Gamma_{v}$ such that for all $(a_{1}',...,a_{l}')\in K^{l-n}$
with $v(a_{i}-a_{i}')>2\alpha$, for $i\in\{1,...,l-n\}$,
there exists a unique $(a_{l-n+1}',...,a_{l}')\in K^{n}$ such that
$(a_{1}',...,a_{l}')$ is a common zero of $f_{1},...,f_{n}$,
and $v(a_{i}-a_{i}')>\alpha$, for $i\in\{l-n+1,...,l\}$.
\end{fact}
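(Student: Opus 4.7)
The plan is to prove \autoref{fact:MIFT} by combining the one-variable \autoref{fact:IFT} with a multivariate Newton-iteration argument, after a standard linear-algebra reduction. Since $\tilde J(\mathbf{a})$ is invertible over $K$, we may replace the column $(f_1,\ldots,f_n)^T$ by $\tilde J(\mathbf{a})^{-1}\cdot(f_1,\ldots,f_n)^T$; the common zero set is unchanged, and from now on we may assume $\tilde J(\mathbf{a})=I_n$, i.e.\ $\frac{\partial f_i}{\partial X_{l-n+j}}(\mathbf{a})=\delta_{ij}$.

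A first, tempting route is induction on $n$. The case $n=1$ is essentially \autoref{fact:IFT} applied with $Y:=X_l$, after taking a single $\alpha\in\Gamma_v$ dominating all of $\alpha_1,\ldots,\alpha_{l-1},\beta$ produced there and enlarging to absorb the ``$2\alpha$'' in the hypothesis. For the inductive step from $n$ to $n+1$, one would use $\frac{\partial f_1}{\partial X_{l-n}}(\mathbf{a})=1\neq0$ together with \autoref{fact:IFT} to solve $f_1=0$ for $X_{l-n}$ as a continuous function $g$ on a ball around $\mathbf{a}$, then substitute to obtain $\tilde f_2,\ldots,\tilde f_{n+1}$ in $l-1$ variables; by the chain rule and the vanishing of $\frac{\partial f_i}{\partial X_{l-n}}(\mathbf{a})$ for $i\ge 2$, the Jacobian of these new functions with respect to $(X_{l-n+1},\ldots,X_l)$ is again $I_n$ at the basepoint.

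The main obstacle is that the $\tilde f_i$ are continuous but no longer polynomial, so the induction hypothesis as stated does not literally apply. The cleaner resolution, which I would actually execute, is to bypass the induction and prove \autoref{fact:MIFT} directly by Newton iteration. Define $T\colon K^n\to K^n$ by $T(\mathbf c):=\mathbf c - F(\mathbf{a}_1',\mathbf c)$, where $F=(f_1,\ldots,f_n)^T$ and $\mathbf{a}_1':=(a_1',\ldots,a_{l-n}')$ is the given nearby input tuple. Since $F$ is polynomial and $\tilde J(\mathbf{a})=I_n$, Taylor expansion around $\mathbf{a}$ exhibits $T$ as a strict contraction in the valuation topology on the ball of radius $\alpha$ around $\mathbf{a}_2:=(a_{l-n+1},\ldots,a_l)$ whenever $\mathbf{a}_1'$ lies in the ball of radius $2\alpha$ around $\mathbf{a}_1:=(a_1,\ldots,a_{l-n})$; the $2\alpha$ versus $\alpha$ asymmetry is exactly the slack needed to keep the quadratic Taylor remainder below the linear main term. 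Uniqueness of the fixed point $\mathbf{a}_2'$ is then immediate from the contraction property, and existence follows from henselianity; extracting the multivariate Hensel lemma from the single-variable one by precisely this Newton-iteration computation is the content of \cite[Section 4]{K2}.
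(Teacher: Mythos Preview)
The paper does not give its own proof of \autoref{fact:MIFT}: it is stated as a \emph{Fact} and simply attributed to \cite[Section~4]{K2}. So there is no argument in the paper to compare yours against; your sketch is going beyond what the paper does.

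As a sketch your outline is broadly reasonable, and the normalisation $\tilde J(\mathbf{a})=I_n$ together with the Taylor-expansion estimate showing that $T(\mathbf c)=\mathbf c-F(\mathbf a_1',\mathbf c)$ is a strict contraction on a ball around $\mathbf a_2$ are the right computations. Your identification of the obstacle in the naive induction (that substituting the implicit function destroys polynomiality) is also correct.

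The one place where the argument, as written, is not self-contained is the sentence ``existence follows from henselianity''. The contraction estimate gives you a Cauchy sequence of approximate solutions, and Cauchy sequences converge in \emph{complete} valued fields, not in arbitrary henselian ones. To finish for a general henselian $(K,v)$ you need one further step: either observe that the statement is first-order and pass to the completion $\widehat K$ (noting that $K\preceq_\exists\widehat K$ for henselian $K$, so the solution found in $\widehat K$ already lies in $K$), or give an algebraic reduction of the multivariate system to a single polynomial equation to which the one-variable Hensel lemma applies directly. That reduction is precisely what Kuhlmann carries out in \cite[Section~4]{K2}, so your final deferral to that reference is legitimate; but the phrase ``existence follows from henselianity'' by itself hides the actual content.
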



\begin{lemma}\label{lem:separable.projection}
Let $(\mathbf{c},\mathbf{d})\in K^{m+n}$ and suppose that 
$\mathbf{c}$ is a separating transcendence base for
$C(\mathbf{c},\mathbf{d})/C$.
Then there exists $(\bfgamma,\bfdelta)\in\Gamma_{v}^{m+n}$ such that
\begin{align*}
\locus(\mathbf{c},\mathbf{d}/C)\cap B(\bfgamma,\bfdelta;\mathbf{c},\mathbf{d})
\end{align*}
is the graph of a continuous function
\begin{align*}
\mathbf{f}:B(\bfgamma;\mathbf{c})&\longrightarrow B(\bfdelta;\mathbf{d}).
\end{align*}
\end{lemma}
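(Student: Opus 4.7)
My plan is to reduce the statement to the multidimensional Implicit Function Theorem (\autoref{fact:MIFT}), by producing $n$ polynomials over $C$ that vanish at $(\mathbf{c},\mathbf{d})$ and satisfy the required Jacobian condition. Since $\mathbf{c}$ is a separating transcendence base for $C(\mathbf{c},\mathbf{d})/C$, each coordinate $d_i$ is separably algebraic over $C(\mathbf{c})$; let $\mu_i\in C(\mathbf{c})[Y_i]$ denote its minimal polynomial, which is separable, so $\mu_i'(d_i)\neq 0$. Writing the coefficients of $\mu_i$ over a common denominator $q_i(\mathbf{c})\in C[\mathbf{c}]\setminus\{0\}$ (nonzero because $\mathbf{c}$ is algebraically independent over $C$) produces $g_i\in C[X_1,\dots,X_m,Y_i]$ with $g_i(\mathbf{c},d_i)=0$ and $\frac{\partial g_i}{\partial Y_i}(\mathbf{c},d_i)=q_i(\mathbf{c})\cdot\mu_i'(d_i)\neq 0$. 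Regarded inside $C[X_1,\dots,X_m,Y_1,\dots,Y_n]$, the Jacobian matrix $\tilde J$ from \autoref{fact:MIFT} is diagonal with nonzero diagonal entries at $(\mathbf{c},\mathbf{d})$, so $\det\tilde J(\mathbf{c},\mathbf{d})\neq 0$.

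Applying \autoref{fact:MIFT} to $g_1,\dots,g_n$ yields $(\bfgamma,\bfdelta)\in\Gamma_v^{m+n}$ and a continuous function $\mathbf{f}\colon B(\bfgamma;\mathbf{c})\to B(\bfdelta;\mathbf{d})$ whose graph is exactly the common zero set of $g_1,\dots,g_n$ inside $B(\bfgamma,\bfdelta;\mathbf{c},\mathbf{d})$. Because each $g_i$ belongs to the prime ideal $J\subseteq C[X,Y]$ of polynomials vanishing at $(\mathbf{c},\mathbf{d})$, every point of the locus is a common zero of the $g_i$; by the uniqueness clause of \autoref{fact:MIFT}, it follows that $\locus(\mathbf{c},\mathbf{d}/C)\cap B(\bfgamma,\bfdelta;\mathbf{c},\mathbf{d})$ is contained in the graph of $\mathbf{f}$. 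For the reverse inclusion, I pass to the local ring $\mathcal O=C[X,Y]_J$: the nonvanishing of $\det\tilde J(\mathbf{c},\mathbf{d})$ makes $g_1,\dots,g_n$ a regular sequence in $\mathcal O$, so the ideal $I\mathcal O$ that they generate is prime of height $n$. Since $J\mathcal O$ is also prime of height $n$ and contains $I\mathcal O$, the two must coincide; equivalently, there exists $h\in C[X,Y]$ with $h(\mathbf{c},\mathbf{d})\neq 0$ such that the common zero set of $g_1,\dots,g_n$ agrees with the locus on the Zariski-open set $\{h\neq 0\}$.

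To conclude, polynomials are continuous in the valuation topology, so $\{h\neq 0\}$ is an open neighbourhood of $(\mathbf{c},\mathbf{d})$. Shrinking $(\bfgamma,\bfdelta)$ so that $B(\bfgamma,\bfdelta;\mathbf{c},\mathbf{d})\subseteq\{h\neq 0\}$, and, if necessary, shrinking $\bfgamma$ further so that continuity of $\mathbf{f}$ at $\mathbf{c}$ keeps $\mathbf{f}(B(\bfgamma;\mathbf{c}))\subseteq B(\bfdelta;\mathbf{d})$, yields the required equality $\locus(\mathbf{c},\mathbf{d}/C)\cap B(\bfgamma,\bfdelta;\mathbf{c},\mathbf{d})=\text{graph}(\mathbf{f})$. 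The main obstacle is the middle paragraph, where I must identify the common zero set of $g_1,\dots,g_n$ with the locus on a Zariski-open neighbourhood of $(\mathbf{c},\mathbf{d})$; this uses the Jacobian criterion of smoothness at the level of local rings, after which the transfer from Zariski-local to $v$-adic-local information is a routine continuity argument.
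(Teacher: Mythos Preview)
Your proof is correct and follows the same overall plan as the paper: produce $n$ polynomials $g_1,\dots,g_n$ over $C$ vanishing at $(\mathbf{c},\mathbf{d})$ with nonsingular $Y$-Jacobian, apply \autoref{fact:MIFT}, and then shrink to a Zariski-open (hence $v$-open) neighbourhood on which the common zero set coincides with the locus. The only real difference is in the choice of the $g_i$: the paper clears denominators in the minimal polynomial of $d_j$ over the tower $C(\mathbf{c},d_1,\dots,d_{j-1})$, obtaining a lower-triangular $\tilde J$, whereas you work over $C(\mathbf{c})$ alone and obtain a diagonal $\tilde J$. Both choices work, and yours is marginally simpler. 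You also spell out, via localisation at $J$, why $Z(g_1,\dots,g_n)$ and the locus agree on a Zariski-open set containing $(\mathbf{c},\mathbf{d})$; the paper simply asserts this.

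One phrasing to tighten: from ``$g_1,\dots,g_n$ is a regular sequence in $\mathcal O$'' alone you cannot conclude that $I\mathcal O$ is prime (for instance $x^2$ in $k[[x]]$ is a regular sequence generating a non-prime ideal of height $1$). What the Jacobian condition actually gives is that the differentials $dg_i$ are linearly independent in $\Omega_{C[X,Y]/C}\otimes\kappa$, hence the images of the $g_i$ in $J\mathcal O/(J\mathcal O)^2$ are linearly independent; since $\mathcal O$ is regular local of dimension $n$, they therefore form a regular system of parameters and generate the maximal ideal outright, i.e.\ $I\mathcal O=J\mathcal O$. With this small rewording the argument is complete.
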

\begin{proof}
%
Write $\mathbf{d}=(d_{1},...,d_{n})$ and let $j\in\{1,...,n\}$.
We let $g_{j}\in C[\mathbf{X},Y_{1},...,Y_{j}]$ and $h_{j}\in C[\mathbf{X},Y_{1},...,Y_{j-1}]$ be polynomials such that
$h_{j}(\mathbf{c},d_{1},...,d_{j-1})\neq0$ and
$$\frac{g_{j}(\mathbf{c},d_{1},...,d_{j-1},Y_{j})}{h_{j}(\mathbf{c},d_{1},...,d_{j-1})}$$
is the minimal polynomial of $d_{j}$ over $C(\mathbf{c},d_{1},...,d_{j-1})$.
Since $\mathbf{c}$ is a separating transcendence base for $C(\mathbf{c},\mathbf{d})/C$, $d_{j}$ is separably algebraic over $C(\mathbf{c},d_{1},...,d_{j-1})$, and so
$g_{j}(\mathbf{c},d_{1},...,d_{j})=0$ and
$$\frac{\partial g_{j}}{\partial Y_{j}}(\mathbf{c},d_{1},...,d_{j})\neq0.$$
Let $Z(g_{1},...,g_{n})$ denote the common zeroes of $g_{1},...,g_{n}$ in $K^{m+n}$, this is simply the set of $K$-rational points of the algebraic set defined in affine $m+n$-space by the vanishing of $g_{1},...,g_{n}$.
By our definition of the locus, we have $\locus(\mathbf{c},\mathbf{d}/C)\subseteq Z(g_{1},...,g_{n})$.
In fact, there is a Zariski open set $U$ such that $(\mathbf{c},\mathbf{d})\in U$ and 
\begin{align*}
Z(g_{1},...,g_{n})\cap U&=\locus(\mathbf{c},\mathbf{d}/C)\cap U.
\end{align*}
If we define $\tilde{J}$ as above, using the polynomials $g_{1},...,g_{n}$,
then $\tilde{J}(\mathbf{c},\mathbf{d})$ is a lower triangular matrix with non-zero entries on the diagonal.
Therefore $\det\tilde{J}(\mathbf{c},\mathbf{d})\neq0$.
By applying \autoref{fact:MIFT}
we obtain $\alpha\in\Gamma_{v}$ such that,
by writing $\bfgamma=(2\alpha,...,2\alpha)$ and $\bfdelta=(\alpha,...,\alpha)$,
we find that
\begin{align*}
Z(g_{1},...,g_{n})\cap B(\bfgamma,\bfdelta;\mathbf{c},\mathbf{d})
\end{align*}
is the graph of a continuous function
\begin{align*}
B(\bfgamma;\mathbf{c})&\longrightarrow B(\bfdelta;\mathbf{d}).
\end{align*}
By the continuity, we may assume that $B(\bfgamma,\bfdelta;\mathbf{c},\mathbf{d})\subseteq U$, and so we are done.
\end{proof}

\begin{lemma}\label{lem:x-projection}
Let $(c,\mathbf{d})\in K^{1+n}$ and suppose that 
$c$ is transcendental over $C$,
and $C(c,\mathbf{d})/C(c)$ is separable.
Then there exists $\gamma\in\Gamma_{v}$ such that
\begin{align*}
B(\gamma;c)&\subseteq\pr_{x}\locus(c,\mathbf{d}/C),
\end{align*}
where $\pr_{x}:K^{1+n}\longrightarrow K$ is the projection onto the $x$-coordinate, which is the first coordinate.
\end{lemma}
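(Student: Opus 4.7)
The plan is to obtain the conclusion as an immediate specialization of \autoref{lem:separable.projection}, taking $\mathbf{c}$ there to be the one-element tuple $(c)$, so that $m=1$. To justify this, I would first observe that the two hypotheses of the present lemma --- that $c$ is transcendental over $C$, and that $C(c,\mathbf{d})/C(c)$ is separable --- combine to say exactly that $(c)$ is a separating transcendence base of $C(c,\mathbf{d})/C$, reading `separable' in the separably algebraic sense (which is the intended case here, since Mac Lane separability of $C(c,\mathbf{d})/C(c)$ together with algebraicity of $\mathbf{d}$ over $C(c)$ yields separable algebraicity; in particular the $1$-tuple $(c)$ is already a transcendence base).

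Applying \autoref{lem:separable.projection} then produces some $\gamma\in\Gamma_{v}$ (replacing the $1$-tuple $\bfgamma$) and some $\bfdelta\in\Gamma_{v}^{n}$, together with a continuous function
$$\mathbf{f}:B(\gamma;c)\longrightarrow B(\bfdelta;\mathbf{d}),$$
whose graph equals $\locus(c,\mathbf{d}/C)\cap B(\gamma,\bfdelta;c,\mathbf{d})$. The conclusion now falls out: for each $c'\in B(\gamma;c)$ the point $(c',\mathbf{f}(c'))$ lies in $\locus(c,\mathbf{d}/C)$, and projecting to the first coordinate gives $c'=\pr_{x}(c',\mathbf{f}(c'))\in\pr_{x}\locus(c,\mathbf{d}/C)$. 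Hence $B(\gamma;c)\subseteq\pr_{x}\locus(c,\mathbf{d}/C)$, as required.

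The main --- essentially only --- step is the hypothesis check reducing to \autoref{lem:separable.projection}; once that reduction is in hand, the projection statement is automatic, and I do not anticipate any substantive obstacle.
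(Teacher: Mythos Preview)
Your reduction to \autoref{lem:separable.projection} with $m=1$ rests on the claim that $(c)$ is a separating transcendence base of $C(c,\mathbf{d})/C$, and for that you need $\mathbf{d}$ to be algebraic over $C(c)$. But the hypothesis only says that $C(c,\mathbf{d})/C(c)$ is \emph{separable} in the Mac Lane sense adopted in the paper (see the definition in \autoref{section:preliminaries}); it does not say the extension is algebraic. You slip in ``together with algebraicity of $\mathbf{d}$ over $C(c)$'' as though it were given, but it is not --- and in the main application (\autoref{prp:henselian}) the tuple $\mathbf{b}$ is certainly not assumed algebraic over $C(a^{p^{-n}})$.

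The paper's proof handles exactly this: since $C(c,\mathbf{d})/C(c)$ is separable, the finite tuple $\mathbf{d}$ contains a separating transcendence base $\mathbf{d}_{1}$ of $C(c,\mathbf{d})/C(c)$; then $(c,\mathbf{d}_{1})$ is a separating transcendence base of $C(c,\mathbf{d})/C$, and \autoref{lem:separable.projection} applies with $\mathbf{c}=(c,\mathbf{d}_{1})$. The resulting ball $B(\gamma,\bfdelta_{1};c,\mathbf{d}_{1})$ lies in $\pr_{x,\mathbf{y}_{1}}\locus(c,\mathbf{d}/C)$, and a further projection to the $x$-coordinate gives the desired $B(\gamma;c)\subseteq\pr_{x}\locus(c,\mathbf{d}/C)$. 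Your argument becomes correct once you insert this choice of $\mathbf{d}_{1}$; without it, the $m=1$ case of \autoref{lem:separable.projection} simply does not apply.
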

\begin{proof}
Re-ordering if necessary, write $\mathbf{d}$ as the pair $(\mathbf{d}_{1},\mathbf{d}_{2})$, where $\mathbf{d}_{1}\subseteq\mathbf{d}$ is a separating transcendence base for $C(c,\mathbf{d})/C(c)$,
Then $(c,\mathbf{d}_{1})$ is a separating transcendence base for $C(c,\mathbf{d})/C$.
Suppose that $\mathbf{d}_{1}$ is an $n_{1}$-tuple.
By \autoref{lem:separable.projection}, there exists $(\gamma,\bfdelta_{1})\in\Gamma_{v}^{1+n_{1}}$
such that
$$B(\gamma,\bfdelta_{1};c,\mathbf{d}_{1})\subseteq\pr_{x,\mathbf{y}_{1}}\locus(c,\mathbf{d}/C),$$
where $\pr_{x,\mathbf{y}_{1}}$ is the map projecting a $(1+n)$-tuple onto its first $(1+n_{1})$-many coordinates.
Projecting again, onto the $x$-coordinate, we obtain $B(\gamma;c)\subseteq\pr_{x}\locus(c,\mathbf{d}/C)$,
as required.
\end{proof}

\subsection{Bounding the transcendence degree}
\label{subsection:bounding.trdeg}


We suppose throughout this subsection that $K$ is $\aleph_{1}$-saturated.\footnote{In fact it suffices that $K$ is $\aleph_{0}$-saturated. To see this, one argues that $\Lambda_{K}(\mathbf{c})$ is in the definable closure of $\mathbf{c}$. Rather than give the details here, we simply assume that $K$ is $\aleph_{1}$-saturated.}
This subsection is devoted to a proof of
\autoref{prp:Egen.bounded}.

Just as in \autoref{section:uniformly.big},
we let $\phi(x;\mathbf{q})$ be an {\em existential} $\mathcal{L}_{\mathrm{ring}}$-formula with one free-variable $x$ and a tuple $\mathbf{q}\in K^{\mathbf{y}}$ of parameters.
We denote by $X=\phi(K;\mathbf{q})$ the subset defined by $\phi(x;\mathbf{q})$.
Let $\Lambda_{K}(\mathbf{q})$ denote the relative inseparable closure in $K$ of the subfield generated by the parameters $\mathbf{q}$.
Let $C$ be the relative algebraic closure in $K$ of $\Lambda_{K}(\mathbf{q})$.
Note that $K/C$ is regular.

Existentially definable sets, such as $X$, are the projections of algebraic sets.
More precisely, using a standard reduction, there exist finitely many multivariable polynomials $f_{1},...,f_{k}\in C[x,y_{1},...,y_{r}]$ such that
$X=\pr_{x}(V)$,
where
$$V=\big\{(x,y_{1},...,y_{r})\in K^{1+r}\;\big|\;\text{$f_{i}(x,y_{1},...,y_{k})=0$, for all $i\leq k$}\big\}$$
is the set of $(1+r)$-tuples from $K$ at which each of the polynomials $f_{i}$ vanishes,
and
$$\pr_{x}: K^{1+r}\longrightarrow K$$
is the map that projects an $r$-tuple onto its $x$-coordinate.
In other language, $V$ is the set of $K$-rational points of the algebraic set defined to be the common zero-locus of the polynomials $f_{i}$, for $i\leq k$.

By our assumption that $K$ is $\aleph_{1}$-saturated, and by \autoref{lem:countable},
we may realise in $K$ the type of an element of $X$ which is transcendental over $C$.
Let $a\in K$ denote such an element, and let $\mathbf{b}\in K^{r}$ be an $r$-tuple such that $(a,\mathbf{b})\in V$.
%
Since $K/C$ is separable, the tuple $(a,\mathbf{b})$ may be reordered into a tuple
$(\mathbf{c},\mathbf{d})$ such that $\mathbf{c}$ is a separating transcendence base of $C(a,\mathbf{b})/C$.
That is, $\mathbf{c}$ is algebraically independent over $C$,
and the extension $C(\mathbf{c},\mathbf{d})/C(\mathbf{c})$ is separably algebraic.
In particular, note that $a$ is separably algebraic over $C(\mathbf{c})$.
Also, since $a$ is transcendental over $C$, $\mathbf{c}$ is not the empty tuple.

Since $(\mathbf{c},\mathbf{d})$ is just a reordering of $(a,\mathbf{b})$, we have $\pr_{x}\locus(\mathbf{c},\mathbf{d}/C)\subseteq\pr_{x}V=X$, where $\pr_{x}$ still denotes the projection onto the $x$-coordinate,  corresponding to $a$, even if this is no longer the first coordinate.
Write $1+r=s+t$, where $\mathbf{c}$ is an $s$-tuple and $\mathbf{d}$ is a $t$-tuple.
%
By \autoref{lem:separable.projection}, there exists $(\bfgamma,\bfdelta)\in\Gamma_{v}^{s+t}$ and a continuous function
$\mathbf{f}:B(\bfgamma;\mathbf{c})\longrightarrow B(\bfdelta;\mathbf{d})$
such that
\begin{align*}
\mathrm{graph}(\mathbf{f})=\locus(\mathbf{c},\mathbf{d}/C)\cap B(\bfgamma,\bfdelta;\mathbf{c},\mathbf{d}).
\end{align*}
Therefore
$\pr_{x}\big(\mathrm{graph}(\mathbf{f})\big)\subseteq X$.

\begin{lemma}\label{lem:isomorphic}
Let $F/C$ be a field extension and let $\mathbf{a}=(a_{1},...,a_{n})$ and $\mathbf{a}'=(a_{1}',...,a_{n}')$ be two $n$-tuples from $F$.
Suppose that
$\mathbf{a}'\in\locus(\mathbf{a}/C)$ and
$\mathrm{trdeg}(\mathbf{a}/C)\leq\mathrm{trdeg}(\mathbf{a}'/C)$.
Then there is a $C$-isomorphism $\rho:C(\mathbf{a})\longrightarrow C(\mathbf{a}')$ that maps $a_{i}\longmapsto a_{i}'$, for $i\leq n$.
\end{lemma}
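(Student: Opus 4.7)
The plan is to translate the locus condition into an inclusion of ideals in $C[X_{1},\ldots,X_{n}]$, use this to build a surjective $C$-algebra homomorphism $C[\mathbf{a}]\longrightarrow C[\mathbf{a}']$, and then invoke the transcendence-degree hypothesis to show that this surjection is in fact an isomorphism. Extending to fraction fields then yields the desired $\rho$.

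More precisely, let $I(\mathbf{a}):=\{f\in C[X_{1},\ldots,X_{n}]\mid f(\mathbf{a})=0\}$ and similarly $I(\mathbf{a}')$. These are prime ideals, since $C[\mathbf{a}]$ and $C[\mathbf{a}']$ are integral domains (being subrings of $F$). By definition of the locus, the hypothesis $\mathbf{a}'\in\locus(\mathbf{a}/C)$ is precisely the containment $I(\mathbf{a})\subseteq I(\mathbf{a}')$. Passing to quotients therefore yields a surjective $C$-algebra homomorphism
\[
\pi\colon C[\mathbf{a}]\cong C[X_{1},\ldots,X_{n}]/I(\mathbf{a})\longrightarrow C[X_{1},\ldots,X_{n}]/I(\mathbf{a}')\cong C[\mathbf{a}'],
\]
sending $a_{i}\longmapsto a_{i}'$ for each $i\leq n$.

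The crux is to show that $\ker\pi=0$. Here I would use the transcendence-degree hypothesis together with the standard fact that, for a finitely generated integral domain $R$ over a field $C$, the Krull dimension of $R$ equals $\mathrm{trdeg}(\mathrm{Frac}(R)/C)$. Hence $\dim C[\mathbf{a}]=\mathrm{trdeg}(\mathbf{a}/C)$ and $\dim C[\mathbf{a}']=\mathrm{trdeg}(\mathbf{a}'/C)$. If $\ker\pi$ were a nonzero prime of the finitely generated $C$-domain $C[\mathbf{a}]$, then by the standard dimension-drop for such rings we would have $\dim C[\mathbf{a}']=\dim C[\mathbf{a}]/\ker\pi<\dim C[\mathbf{a}]$, i.e.\ $\mathrm{trdeg}(\mathbf{a}'/C)<\mathrm{trdeg}(\mathbf{a}/C)$, contradicting the hypothesis.

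Therefore $\pi$ is a $C$-algebra isomorphism, and it extends uniquely to a $C$-isomorphism $\rho\colon C(\mathbf{a})\longrightarrow C(\mathbf{a}')$ of fraction fields with $\rho(a_{i})=a_{i}'$, as required. The only step requiring any real input is the dimension-drop argument, but this is classical and poses no genuine obstacle; if one wished to avoid Krull dimension altogether, an equivalent route is to pick a transcendence basis among the $a_{i}$'s, check using the trdeg inequality that its image in $C[\mathbf{a}']$ remains algebraically independent, and then deduce the injectivity of $\pi$ directly from the algebraicity of the remaining generators.
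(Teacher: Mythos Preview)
Your argument is correct. The ideal-theoretic translation of the locus condition, the surjection $C[\mathbf{a}]\twoheadrightarrow C[\mathbf{a}']$, and the dimension-drop step are all sound; the alternative route via a transcendence basis that you sketch at the end also works and is in fact closer to how this is usually phrased in the Weil--Lang language of specializations.

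The paper itself does not give an argument at all: it simply observes that the lemma is a restatement of \cite[Theorem 5]{LangAG} (generic specializations are isomorphisms) in the present notation. So your proof is not so much a different route as a self-contained unpacking of what the paper defers to Lang for. The benefit of your write-up is that it makes the lemma independent of that reference and exposes exactly where each hypothesis is used; the cost is a few lines of commutative algebra that the paper avoids by citation.
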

\begin{proof}
This is just a rewriting of \cite[Theorem 5]{LangAG} into our language.
\end{proof}

\begin{lemma}\label{lem:exchange}
There exists $c_{1}\in\mathbf{c}$ such that $c_{1}$ is algebraic over $C(\mathbf{c}_{2},a)$, where $\mathbf{c}_{2}:=\mathbf{c}\setminus\{c_{1}\}$.
\end{lemma}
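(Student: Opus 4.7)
The plan is to apply the Steinitz exchange property for algebraic dependence to the algebraically independent tuple $\mathbf{c} = (c_1,\ldots,c_s)$ and the element $a$. From the construction preceding the lemma we already know that $\mathbf{c}$ is algebraically independent over $C$; that $a$ is algebraic over $C(\mathbf{c})$, since $\mathbf{c}$ is a separating transcendence base of $C(\mathbf{c},\mathbf{d})/C$ and $a \in C(\mathbf{c},\mathbf{d})$; and that $a$ is transcendental over $C$, which in particular forces $s \geq 1$.

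Thus $\mathbf{c}$ is a transcendence base of $C(a,\mathbf{c})/C$ of size $s$, and $a \in C(a,\mathbf{c})$ is algebraic over $C(\mathbf{c})$ but not over $C$. The standard exchange lemma (see e.g.\ \cite[Chapter VIII]{Lang}) then produces an index $i$ such that $\{a\} \cup (\mathbf{c} \setminus \{c_i\})$ is again a transcendence base of $C(a,\mathbf{c})/C$. After relabelling we may take $i=1$; writing $\mathbf{c}_2 := \mathbf{c} \setminus \{c_1\}$, the element $c_1 \in C(a,\mathbf{c})$ must then be algebraic over $C(\mathbf{c}_2,a)$, which is the required conclusion.

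There is no genuine obstacle here beyond a one-line application of exchange: all the substance lies in the preparatory facts already in place, namely the choice of $a$ transcendental over $C$ and the recognition of $\mathbf{c}$ as an algebraically independent set over which $a$ is algebraic. A bare-hands alternative would be to fix a non-zero $p(X,Y_1,\ldots,Y_s) \in C[X,\mathbf{Y}]$ with $p(a,\mathbf{c})=0$, observe that $p$ must involve some $Y_i$ non-trivially (else $a$ would be algebraic over $C$), and then exhibit the corresponding $c_i$ as algebraic over $C(a,\mathbf{c}\setminus\{c_i\})$; but this route essentially re-proves Steinitz exchange, so invoking it directly is both cleaner and more transparent.
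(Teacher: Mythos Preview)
Your proof is correct and follows essentially the same approach as the paper: both recall the standing hypotheses that $\mathbf{c}$ is algebraically independent over $C$, that $a$ is transcendental over $C$, and that $a$ is algebraic over $C(\mathbf{c})$, and then invoke the Steinitz exchange property to swap $a$ for some $c_i$. The paper phrases this as extending $\{a\}$ to a maximal algebraically independent subtuple $(\mathbf{c}_2,a)$ of $(\mathbf{c},a)$ and letting $c_1$ be the omitted coordinate, but this is the same manoeuvre.
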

\begin{proof}
Our assumptions are that $\mathbf{c}$ is algebraically independent over $C$, $a$ is transcendental over $C$, and $a$ is algebraic over $C(\mathbf{c})$.
Therefore, we may choose a subtuple $\mathbf{c}_{2}\subset\mathbf{c}$ of length $n-1$ such that $(\mathbf{c}_{2},a)$ is algebraically independent over $C$.
Let $c_{1}$ denote the unique element of $\mathbf{c}\setminus\mathbf{c}_{2}$.
Clearly $c_{1}$ is algebraic over $C(\mathbf{c}_{2},a)$, by the Exchange Property.
\end{proof}

By reordering if necessary, we write $\mathbf{c}=(c_{1},\mathbf{c}_{2})$.
Fix the corresponding reordering $(\gamma_{1},\bfgamma_{2})$ of $\bfgamma$.
We reiterate that $\pr_{x}$ still denotes the projection onto the $x$-coordinate, corresponding to $a$, so that $\pr_{x}(c_{1},\mathbf{c}_{2},\mathbf{d})=a$.

\begin{lemma}\label{lem:local.algebraic}
Let $e\in B(\gamma_{1};c_{1})$ and let
$\alpha$ be the $x$-coordiante of $(e,\mathbf{c}_{2},\mathbf{f}(e,\mathbf{c}_{2}))$,
so that $\alpha=\pr_{x}(e,\mathbf{c}_{2},\mathbf{f}(e,\mathbf{c}_{2}))$.
Then $e$ is algebraic over $C(\mathbf{c}_{2},\alpha)$.
\end{lemma}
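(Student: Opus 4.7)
My plan is to transport a polynomial relation from the tuple $(c_1,\mathbf{c}_2,a)$ over to $(e,\mathbf{c}_2,\alpha)$ by means of the locus inclusion
\[
(e,\mathbf{c}_2,\mathbf{f}(e,\mathbf{c}_2))\in\mathrm{graph}(\mathbf{f})\subseteq\locus((c_1,\mathbf{c}_2,\mathbf{d})/C).
\]
In the degenerate case that $\pr_x$ projects onto the $c_1$-coordinate (i.e.\ $a=c_1$), we have $\alpha=e$ and the conclusion is immediate, so I henceforth assume the variable $v_a$ corresponding to $\pr_x$ sits among the $\mathbf{d}$-coordinates.

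By \autoref{lem:exchange}, $c_1$ is algebraic over $C(\mathbf{c}_2,a)$. Clearing denominators from its minimal polynomial, I produce $P(X,\mathbf{Y},Z)\in C[X,\mathbf{Y},Z]$ with $P(c_1,\mathbf{c}_2,a)=0$ and whose leading coefficient $q(\mathbf{Y},Z)$ as a polynomial in $X$ satisfies $q(\mathbf{c}_2,a)\neq0$. I then substitute the coordinate variable for $c_1$ into $X$, the coordinate variables for $\mathbf{c}_2$ into $\mathbf{Y}$, and $v_a$ into $Z$, producing a polynomial $r\in C[\mathbf{v}]$ in the variables for $(c_1,\mathbf{c}_2,\mathbf{d})$ with $r(c_1,\mathbf{c}_2,\mathbf{d})=0$. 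By the locus inclusion above, $r$ also vanishes at $(e,\mathbf{c}_2,\mathbf{f}(e,\mathbf{c}_2))$, and this unpacks to $P(e,\mathbf{c}_2,\alpha)=0$.

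The main obstacle is ensuring that $P(X,\mathbf{c}_2,\alpha)$ is a nontrivial polynomial in $X$, i.e.\ that the leading coefficient $q(\mathbf{c}_2,\alpha)$ is nonzero; only then does $P(e,\mathbf{c}_2,\alpha)=0$ witness the desired algebraicity. Since $q(\mathbf{c}_2,Z)$ is a nonzero univariate polynomial over $C(\mathbf{c}_2)$ with $q(\mathbf{c}_2,a)\neq0$, its zero set in $K$ is finite and avoids $a$; hence there is some $\delta\in\Gamma_v$ with $q(\mathbf{c}_2,z)\neq0$ for all $z\in B(\delta;a)$. The map $e\longmapsto\alpha=\pr_x(e,\mathbf{c}_2,\mathbf{f}(e,\mathbf{c}_2))$ is continuous, being a composition of $\mathbf{f}$ with a coordinate projection, so after shrinking $\gamma_1$ (which is permissible, as the data $(\bfgamma,\bfdelta,\mathbf{f})$ from \autoref{lem:separable.projection} remain valid upon restricting to smaller balls) I may assume $\alpha\in B(\delta;a)$ for every $e\in B(\gamma_1;c_1)$. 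Then $q(\mathbf{c}_2,\alpha)\neq0$, so $P(X,\mathbf{c}_2,\alpha)$ is a nontrivial polynomial in $X$ with root $e$, proving $e$ is algebraic over $C(\mathbf{c}_2,\alpha)$.
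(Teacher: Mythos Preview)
Your argument is correct, but it takes a genuinely different route from the paper's proof. The paper argues by a case split on whether $e$ is algebraic over $C(\mathbf{c}_{2})$: if so the conclusion is immediate; if not, then $(e,\mathbf{c}_{2})$ has the same transcendence degree over $C$ as $(\mathbf{c},\mathbf{d})$, and since $(e,\mathbf{c}_{2},\mathbf{f}(e,\mathbf{c}_{2}))\in\locus(\mathbf{c},\mathbf{d}/C)$, \autoref{lem:isomorphic} yields a $C$-isomorphism $\rho:C(\mathbf{c},\mathbf{d})\to C(e,\mathbf{c}_{2},\mathbf{f}(e,\mathbf{c}_{2}))$ sending $c_{1}\mapsto e$ and $a\mapsto\alpha$, so \autoref{lem:exchange} transports directly to give $e$ algebraic over $C(\mathbf{c}_{2},\alpha)$.

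By contrast, you push a single polynomial relation $P(c_{1},\mathbf{c}_{2},a)=0$ through the locus inclusion to obtain $P(e,\mathbf{c}_{2},\alpha)=0$, and then use continuity of $e\mapsto\alpha$ to guarantee the leading coefficient $q(\mathbf{c}_{2},\alpha)$ is nonzero. This is more explicit and avoids invoking \autoref{lem:isomorphic}, but it costs you the shrinking of $\gamma_{1}$: strictly speaking you have proved the lemma only for $e$ in a possibly smaller ball than the one fixed in the setup. This is harmless for the downstream application in \autoref{prp:Egen.bounded}, and you are right that the data from \autoref{lem:separable.projection} survive restriction to smaller balls, so it can be read as a re-choice of $\gamma_{1}$ in the setup. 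The paper's approach, via the algebraic/transcendental dichotomy and the generic-point isomorphism, proves the statement for the original $\gamma_{1}$ without any adjustment; in effect the case $e$ transcendental over $C(\mathbf{c}_{2})$ forces $\alpha$ transcendental over $C(\mathbf{c}_{2})$ as well, which would also give $q(\mathbf{c}_{2},\alpha)\neq 0$ in your framework without any shrinking.
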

\begin{proof}
If $e$ is algebraic over $C(\mathbf{c}_{2})$ then it is certainly algebraic over $C(\mathbf{c}_{2},\alpha)$, as required.

On the other hand, suppose that $e$ is transcendental over $C(\mathbf{c}_{2})$.
Since $\mathbf{d}$ is algebraic over $C(\mathbf{c})$,
it follows that
$\mathrm{trdeg}(\mathbf{c},\mathbf{d}/C)\leq\mathrm{trdeg}(e,\mathbf{c}_{2},\mathbf{f}(e,\mathbf{c}_{2})/C)$.
Since $a\in B(\gamma_{1};c_{1})$,
$\mathbf{f}$ is well-defined on the pair $(e,\mathbf{c}_{2})$,
and we have
$(e,\mathbf{c}_{2},\mathbf{f}(e,\mathbf{c}_{2}))\in\mathrm{locus}(\mathbf{c},\mathbf{d}/C)$.
Putting these together, we have satisfied the hypotheses of \autoref{lem:isomorphic}, using which we obtain the $C$-isomorphism
$$\rho:C(\mathbf{c},\mathbf{d})\longrightarrow C(e,\mathbf{c}_{2},\mathbf{f}(e,\mathbf{c}_{2})).$$
Moreover, applying $\rho$ coordinate-wise, we have $\rho(\mathbf{c},\mathbf{d})=(e,\mathbf{c},\mathbf{f}(e,\mathbf{c}_{2}))$.
Thus $\rho(c_{1})=e$ and $\rho(a)=\alpha$.
The result now follows from \autoref{lem:exchange}.
\end{proof}

Let $C(\mathbf{c}_{2},X)$ denote the subfield of $K$ generated over $C(\mathbf{c}_{2})$ by all elements of $X$.

\begin{proposition}\label{prp:Egen.bounded}
Every element of $K$ is algebraic over $C(\mathbf{c}_{2},X)$.
\end{proposition}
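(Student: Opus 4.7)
My plan is to apply \autoref{lem:local.algebraic} to conclude that every element of the ball $B(\gamma_{1};c_{1})$ is algebraic over $C(\mathbf{c}_{2},X)$, and then to exploit the non-triviality of $v$ to reduce an arbitrary element of $K$ to field-theoretic operations on ball elements.

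For the first step, fix $e\in B(\gamma_{1};c_{1})$ and set $\alpha=\pr_{x}(e,\mathbf{c}_{2},\mathbf{f}(e,\mathbf{c}_{2}))$, so that \autoref{lem:local.algebraic} gives that $e$ is algebraic over $C(\mathbf{c}_{2},\alpha)$. I still need to verify that $\alpha\in X$: the triple $(e,\mathbf{c}_{2},\mathbf{f}(e,\mathbf{c}_{2}))$ lies in $\mathrm{graph}(\mathbf{f})\subseteq\locus(\mathbf{c},\mathbf{d}/C)$, and since each defining polynomial $f_{i}$ of $V$ vanishes on the (reordered) tuple $(\mathbf{c},\mathbf{d})$ it vanishes on the whole locus, so this triple lies in $V$ and its $x$-coordinate $\alpha$ lies in $\pr_{x}V=X$. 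Hence $e$ is algebraic over $C(\mathbf{c}_{2},X)$.

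For the second step, let $L$ denote the relative algebraic closure of $C(\mathbf{c}_{2},X)$ in $K$, which is a subfield of $K$ with $B(\gamma_{1};c_{1})\subseteq L$. It suffices to show $K\subseteq L$. Given $k\in K$ with $k\neq 0$, the non-triviality of $\Gamma_{v}$ lets me pick $\pi\in K\setminus\{0\}$ with $v(\pi)>\max\{\gamma_{1},\gamma_{1}-v(k)\}$, so that both $\pi$ and $\pi k$ lie in $B(\gamma_{1};0)=B(\gamma_{1};c_{1})-c_{1}$. Since $c_{1}\in B(\gamma_{1};c_{1})\subseteq L$, adding $c_{1}$ gives $c_{1}+\pi,\;c_{1}+\pi k\in B(\gamma_{1};c_{1})\subseteq L$, whence $\pi,\pi k\in L$, and therefore $k=(\pi k)/\pi\in L$.

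The substantive work has already been done in the preceding lemmas — producing a transcendental element of $X$ with a separating transcendence base of the right shape and parametrising a neighbourhood of $(\mathbf{c},\mathbf{d})$ in $\locus(\mathbf{c},\mathbf{d}/C)$ by the continuous function $\mathbf{f}$ via \autoref{fact:MIFT}. The present proposition is a clean packaging of those ingredients together with the standard observation that, in a non-trivially valued field, any element can be scaled by a suitable element of large valuation into any prescribed ball about the origin.
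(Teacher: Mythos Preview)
Your proof is correct and follows essentially the same approach as the paper: apply \autoref{lem:local.algebraic} to get $B(\gamma_{1};c_{1})$ inside the relative algebraic closure $L$ of $C(\mathbf{c}_{2},X)$, observe $c_{1}\in L$ so that $B(\gamma_{1};0)\subseteq L$, and then use a short valuation trick to conclude $K\subseteq L$. The only cosmetic difference is the final step: you scale an arbitrary $k$ by a $\pi$ of large value so that $\pi,\pi k\in B(\gamma_{1};0)$, whereas the paper instead takes $s\in B(\gamma_{1};0)\setminus\{0\}$ and shows $(s^{-1}+t^{-1})^{-1}\in B(\gamma_{1};0)$ for $t\notin B(\gamma_{1};0)$; both are standard and equally valid.
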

\begin{proof}
It follows from \autoref{lem:local.algebraic} that
every element of $B(\gamma_{1};c_{1})$ is algebraic over
$C(\mathbf{c}_{2},X)$.
In particular, $c_{1}$ is algebraic over $C(\mathbf{c}_{2},X)$.
Therefore every element of $B(\gamma_{1};0)$ is algebraic over $C(\mathbf{c}_{2},X)$.

Let $t\in K$.
If $t\in B(\gamma_{1};0)$, then $t$ is algebraic over $E$.
Otherwise, suppose that $t\notin B(\gamma_{1};0)$.
Choose any $s\in B(\gamma_{1};0)\setminus\{0\}$ and note that $s$ is algebraic over $C(\mathbf{c}_{2},X)$.
Then $v(t)<v(s)$, and so $v(s^{-1})<v(t^{-1})$.
As a standard application of the ultrametric triangle inequality, we deduce the equality
$v(s^{-1}+t^{-1})=v(s^{-1})$.
Therefore $(s^{-1}+t^{-1})^{-1}\in B(\gamma_{1};0)$,
and so $(s^{-1}+t^{-1})^{-1}$ is algebraic over $C(\mathbf{c}_{2},X)$.
It follows that $t$ is algebraic over $C(\mathbf{c}_{2},X)$, as required.
\end{proof}


\subsection{The proof of Theorem \ref{thm:main.theorem} for henselian fields}
\label{subsection:proof.henselian}

We are almost in a position to deduce the main theorem, at least in the context of henselian fields.
Note that we no longer assume $K$ to be $\aleph_{1}$-saturated.

\begin{proposition}\label{prp:Egen.uniformly.big}
Let $K$ be a henselian field.
Let $E\subseteq K$ be an existentially generated subfield of $K$.
Then $E$ is a uniformly big subfield of $K$.
\end{proposition}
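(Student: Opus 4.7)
The plan is to verify condition (3) of \autoref{prp:uniform.equivalences}: exhibit a cardinal $\kappa$ with $\mathrm{trdeg}(\tilde{K}/\tilde{E})\leq\kappa$ for every ring-elementary extension $K\preceq\tilde{K}$, where $\tilde{E}:=(\phi(\tilde{K};\mathbf{q}))$. The strategy is to establish the bound in one well-chosen elementary extension, and then propagate it to all others by a saturation argument.

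First I would pass to an $\aleph_{1}$-saturated elementary extension $(K^{*},v^{*})$ of $(K,v)$ in the language of valued rings. Since henselianity is first-order in this language, $(K^{*},v^{*})$ is henselian; and since the ring language embeds into the valued-ring language, the reduct $K^{*}$ is a ring-elementary extension of $K$ that is $\aleph_{1}$-saturated in the language of rings. The formula $\phi(x;\mathbf{q})$ still defines an infinite set $X^{*}\subseteq K^{*}$, and I set $E^{(*)}:=(X^{*})$. Now \autoref{prp:Egen.bounded} applies to $K^{*}$, producing a finite tuple $\mathbf{c}_{2}\in K^{*}$, of length at most $r$ (the number of existentially quantified variables in a chosen polynomial representation of $\phi$), such that every element of $K^{*}$ is algebraic over $C^{*}(\mathbf{c}_{2},X^{*})$, where $C^{*}$ is the relative algebraic closure in $K^{*}$ of $\Lambda_{K^{*}}(\mathbf{q})$. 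Since $\Lambda_{K^{*}}(\mathbf{q})$ is purely inseparable, hence algebraic, over the subfield generated by $\mathbf{q}$, we have $\mathrm{trdeg}(C^{*}/\text{prime field})\leq|\mathbf{q}|$, whence $\mathrm{trdeg}(K^{*}/E^{(*)})\leq r+|\mathbf{q}|=:\kappa$.

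For an arbitrary ring-elementary extension $K\preceq\tilde{K}$, I would argue by contradiction: if $\mathrm{trdeg}(\tilde{K}/\tilde{E})>\kappa$, then the countable partial type $r_{\kappa+1}(x_{1},\ldots,x_{\kappa+1})$ defined in \autoref{section:uniformly.big} is realised in $\tilde{K}$, hence consistent with $\mathrm{Th}(K,\mathbf{q})$. But then $\aleph_{1}$-saturation of $K^{*}$ over the finite tuple $\mathbf{q}$ forces $r_{\kappa+1}$ to be realised in $K^{*}$ as well, giving $\mathrm{trdeg}(K^{*}/E^{(*)})\geq\kappa+1$, in contradiction with the bound from the previous paragraph.

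The main obstacle is that a general ring-elementary extension of a henselian field need not itself be henselian, so \autoref{prp:Egen.bounded} cannot be applied directly to $\tilde{K}$; the compactness-saturation step in the last paragraph circumvents this by using the countable type $r_{\kappa+1}$ as a bridge between $\tilde{K}$ and the controlled henselian extension $K^{*}$.
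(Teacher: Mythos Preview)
Your argument is correct and follows the same overall strategy as the paper: apply \autoref{prp:Egen.bounded} inside an $\aleph_{1}$-saturated henselian elementary extension, and then verify clause~(3) of \autoref{prp:uniform.equivalences}. The organisational difference is only in the transfer step. The paper, for each ring-elementary extension $K^{*}$, passes to a further $\aleph_{1}$-saturated $K^{**}\succeq K^{*}$, applies \autoref{prp:Egen.bounded} there, and descends via \autoref{lem:monotone}; you instead fix a single saturated henselian $K^{*}$, obtain the finite bound $\kappa=r+|\mathbf{q}|$, and propagate it to every $\tilde{K}$ by observing that a realisation of $r_{\kappa+1}$ in $\tilde{K}$ would, by consistency and $\aleph_{1}$-saturation, force a realisation in $K^{*}$. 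Both routes are short and essentially equivalent; your explicit finite bound is a minor sharpening, and your care in taking the saturated extension in the valued-ring language (so that henselianity is preserved) makes precise a point the paper leaves implicit. One small remark: $\Lambda_{K^{*}}(\mathbf{q})$ is certainly algebraic over $(\mathbf{q})$ --- which is all you need --- but ``purely inseparable'' is a slightly stronger claim than what the paper establishes; you may wish to weaken that phrase.
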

\begin{proof}
Let $X\subseteq K$ be an infinite subset of $K$ which generates $E$ and is defined by an existential $\mathcal{L}_{\mathrm{ring}}$-formula $\phi(x;\mathbf{q})$.
Let $C$ be the relative algebraic closure in $K$ of $\Lambda_{K}(\mathbf{q})$.
Let $K^{*}\succeq K$ be an arbitrary elementary extension, and let $K^{**}\succeq K^{*}$ be a further $\aleph_{1}$-saturated elementary extension.
As usual, let $X^{*}$ denote the subset of $K^{*}$ defined by $\phi(x;\mathbf{q})$, and write $E^{(*)}=(X^{*})$.
Also, let $X^{**}$ denote the subset of $K^{**}$ defined by $\phi(x;\mathbf{q})$, and write $E^{(**)}=(X^{**})$.
Note that $C$ is the relative algebraic closure of the relative inseparable closure of $\mathbf{q}$ in each of $K$, $K^{*}$, and $K^{**}$.

By $\aleph_{1}$-saturation, we may apply the machinery of the previous section to $K^{**}$.
We choose the tuple $\mathbf{c}_{2}\subseteq K^{**}$ as above.
By \autoref{prp:Egen.bounded}, $K^{**}$ is algebraic over $C(\mathbf{c}_{2},X^{**})$.
Thus we have
\begin{align*}
\mathrm{trdeg}(K^{**}/E^{(**)})&\leq\mathrm{trdeg}(C(\mathbf{c}_{2})),
\end{align*}
where the right hand side is the transcendence degree over the prime field.

By \autoref{lem:monotone}, we have 
$\mathrm{trdeg}(K^{*}/E^{(*)})\leq\mathrm{trdeg}(K^{**}/E^{(**)})$.
Combining this with the previous inequality, we have satisfied clause $(3)$ of \autoref{prp:uniform.equivalences}.
Therefore $E=(X)$ is a uniformly big subfield of $K$.
\end{proof}

Let $\Phi:x\longmapsto x^{p}$ denote the Frobenius map.
Before proving the main proposition of this section, we give four brief lemmas about the interaction of the Frobenius map with balls in the valuation topology and with loci.

\begin{lemma}\label{lem:Frobenius.balls}
Let $\alpha\in\Gamma_{v}$ and $a\in K$.
Then $\Phi\big(B(\alpha;a)\big)=B(p\alpha;a^{p})\cap K^{(p)}$.
\end{lemma}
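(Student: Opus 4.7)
The plan is to split into cases on the characteristic exponent and in each case exploit the fundamental identity $(x-a)^{p} = x^{p} - a^{p}$.

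If $p = 1$ (i.e.\ characteristic zero), then $\Phi$ is the identity, $K^{(p)} = K$, and $p\alpha = \alpha$, so both sides equal $B(\alpha; a)$ and there is nothing to prove. So I would dispose of this case in one line and focus on the case where $p$ is a prime.

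Now assume $p > 0$. For the forward inclusion, I would take $x \in B(\alpha; a)$, so $v(x - a) > \alpha$. Since we are in characteristic $p$, the freshman's dream gives $x^{p} - a^{p} = (x-a)^{p}$, hence
\[
v(x^{p} - a^{p}) = v\bigl((x-a)^{p}\bigr) = p\cdot v(x-a) > p\alpha,
\]
so $\Phi(x) = x^{p} \in B(p\alpha; a^{p})$; and of course $x^{p} \in K^{(p)}$.

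For the reverse inclusion, take $y \in B(p\alpha; a^{p}) \cap K^{(p)}$. Write $y = x^{p}$ for some $x \in K$. Then $v(x^{p} - a^{p}) > p\alpha$, and again by the freshman's dream $x^{p} - a^{p} = (x-a)^{p}$, so
\[
p\cdot v(x-a) = v\bigl((x-a)^{p}\bigr) > p\alpha,
\]
which yields $v(x - a) > \alpha$, i.e.\ $x \in B(\alpha; a)$. Thus $y = \Phi(x) \in \Phi(B(\alpha; a))$.

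There is no real obstacle here; the only thing to be careful about is the case $p = 1$, where the definition of $K^{(p)}$ needs to be read correctly (so that the intersection with $K^{(p)} = K$ is innocuous), and the possibility that $p\alpha$ sits in a divisible hull is avoided because $\Gamma_{v}$ is an abelian group and $p\alpha$ makes sense as $\alpha + \cdots + \alpha$.
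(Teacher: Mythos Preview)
Your proof is correct and is essentially the paper's own argument written out in full: the paper's proof is the single line ``This follows from the fact that $v(a-b)>\alpha$ if and only if $v(a^{p}-b^{p})>p\alpha$,'' which is exactly the equivalence you establish via the identity $(x-a)^{p}=x^{p}-a^{p}$.
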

\begin{proof}
This follows from the fact that $v(a-b)>\alpha$ if and only if $v(a^{p}-b^{p})>p\alpha$.
\end{proof}

\begin{lemma}\label{lem:balls.fields}
Let $n<\omega$, let $a\in K^{(p^{n})}$, and let $\alpha\in\Gamma_{v}$.
Then
\begin{align*}
K^{(p^{n})}&=\bigg\{\frac{x-a}{y-a}\;\bigg|\;x,y\in B(\alpha;a)\cap K^{(p^{n})},y\neq a\bigg\}.
\end{align*}
\end{lemma}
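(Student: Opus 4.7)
The plan rests on the Frobenius identity $u^{p^n} - w^{p^n} = (u-w)^{p^n}$, which in characteristic $p > 0$ is an iterated application of the Frobenius and holds trivially when the characteristic exponent $p = 1$. Because $a \in K^{(p^n)}$, this identity allows us freely to interchange the statements ``$x \in K^{(p^n)}$'' and ``$x - a \in K^{(p^n)}$'', which is the key observation driving both inclusions.

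For the inclusion $\supseteq$: suppose $x, y \in B(\alpha;a) \cap K^{(p^n)}$ with $y \neq a$, and write $a = a_0^{p^n}$, $x = x_0^{p^n}$, $y = y_0^{p^n}$. Then $x - a = (x_0 - a_0)^{p^n}$ and $y - a = (y_0 - a_0)^{p^n}$, so
\[
\frac{x - a}{y - a} = \left(\frac{x_0 - a_0}{y_0 - a_0}\right)^{p^n} \in K^{(p^n)}.
\]

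For the reverse inclusion $\subseteq$: given $c \in K^{(p^n)}$, write $c = c_0^{p^n}$ and $a = a_0^{p^n}$. I will pick $y_0 \in K$ with $y_0 \neq a_0$ and $v(y_0 - a_0)$ sufficiently large, then set $y := y_0^{p^n}$ and $x := a + c(y-a)$. By construction $c = (x-a)/(y-a)$ and $y \neq a$, while by Frobenius
\[
x \;=\; a_0^{p^n} + c_0^{p^n}(y_0 - a_0)^{p^n} \;=\; \bigl(a_0 + c_0(y_0 - a_0)\bigr)^{p^n} \;\in\; K^{(p^n)}.
\]
To place both $x, y$ in $B(\alpha;a)$ I need $v(y - a) = p^n v(y_0 - a_0) > \alpha$ and $v(x - a) = v(c) + p^n v(y_0 - a_0) > \alpha$; both hold once $v(y_0 - a_0)$ is chosen large enough, which is possible because $\Gamma_v$ has no maximum. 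The only non-trivial idea is spotting that, since $a \in K^{(p^n)}$, the element $x = a + c(y-a)$ is forced to lie in $K^{(p^n)}$ automatically; after this observation the proof reduces to an elementary valuation estimate, so there is no real obstacle.
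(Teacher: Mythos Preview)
Your proof is correct. Both directions are handled cleanly: the inclusion $\supseteq$ is immediate from the Frobenius identity and the fact that $K^{(p^{n})}$ is a field containing $a$, and for $\subseteq$ your choice of $y_{0}$ with $v(y_{0}-a_{0})$ large enough (possible since the valuation is nontrivial) does the job.

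The paper organises the argument differently. It first proves the $n=0$ statement centred at $0$, namely $K=\{xy^{-1}\mid x,y\in B(\beta;0),\,y\neq0\}$, by the same kind of valuation estimate you use; it then translates by $a^{p^{-n}}$ and finally applies $\Phi^{n}$, invoking \autoref{lem:Frobenius.balls} to identify $\Phi^{n}(B(\beta;a^{p^{-n}}))$ with $B(p^{n}\beta;a)\cap K^{(p^{n})}$. Your approach bypasses this reduce-and-push-forward structure and works directly in $K^{(p^{n})}$, constructing $x$ and $y$ explicitly; this is slightly more economical (you do not need \autoref{lem:Frobenius.balls} at all) and makes the role of the hypothesis $a\in K^{(p^{n})}$ more transparent. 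The underlying mathematics---Frobenius additivity plus an elementary valuation bound---is the same in both.
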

\begin{proof}
Choose $\beta\in\Gamma_{v}$ such that $\alpha\leq p^{n}\beta$.
Let $z\in K$.
Choose $\gamma\in \Gamma_{v}$ with $\gamma\leq v(z)$,
and choose any $y\in K^{\times}$ with $v(y)>\max\{\beta,\beta-\gamma\}$.
Since $v(y)>\beta$, $y\in B(\beta;0)$.
Also
\begin{align*}
v(zy)&=v(z)+v(y)\\
&>\gamma+\beta-\gamma\\
&=\alpha.
\end{align*}
Therefore $zy\in B(\beta;0)$.
This shows that
$K=\{xy^{-1}\;|\;x,y\in B(\beta;0),y\neq 0\}$.
Since $B(\beta;0)=\{x-a^{p^{-n}}\;|\;x\in B(\beta;a^{p^{-n}})\}$, we have
$$K=\{(x-a^{p^{-n}})(y-a^{p^{-n}})^{-1}\;|\;x,y\in B(\beta;a^{p^{-n}}),y\neq a^{p^{-n}}\}.$$
Finally, applying the $n$-th power of Frobenius, we have
$$K^{(p^{n})}=\{(x-a)(y-a)^{-1}\;|\;x,y\in B(p^{n}\beta;a)\cap K^{(p^{n})},y\neq a\},$$
by \autoref{lem:Frobenius.balls}, as required.
\end{proof}

\begin{lemma}\label{lem:Frobenius.locus.i}
Let $(a,\mathbf{b}),(c,\mathbf{d})\in K^{1+n}$.
Then $(c,\mathbf{d})\in\locus(a,\mathbf{b}/C)$ if and only if $(c^{p},\mathbf{d})\in\locus(a^{p},\mathbf{b}/C)$.
\end{lemma}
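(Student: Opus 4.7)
I would prove \autoref{lem:Frobenius.locus.i} by exploiting the Frobenius endomorphism in two dual ways: substitution on the variable $X$ for one direction, and raising coefficients to the $p$-th power for the other. If the characteristic of $K$ is zero then $p=1$ and the claim is trivial, so I assume $p$ is prime.

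For the forward direction, suppose $(c,\mathbf{d})\in\locus(a,\mathbf{b}/C)$ and let $g(X,\mathbf{Y})\in C[X,\mathbf{Y}]$ vanish at $(a^{p},\mathbf{b})$. Set $f(X,\mathbf{Y}):=g(X^{p},\mathbf{Y})$, which is again a polynomial over $C$. Then $f(a,\mathbf{b})=g(a^{p},\mathbf{b})=0$, so by assumption $f(c,\mathbf{d})=0$, i.e.\ $g(c^{p},\mathbf{d})=0$. Since $g$ was arbitrary, $(c^{p},\mathbf{d})\in\locus(a^{p},\mathbf{b}/C)$.

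For the reverse direction, suppose $(c^{p},\mathbf{d})\in\locus(a^{p},\mathbf{b}/C)$ and let $f(X,\mathbf{Y})\in C[X,\mathbf{Y}]$ vanish at $(a,\mathbf{b})$. Write $f(X,\mathbf{Y})=\sum_{i} c_{i}(\mathbf{Y})\,X^{i}$ with $c_{i}\in C[\mathbf{Y}]$, and define
$$g(X,\mathbf{Y}):=\sum_{i}c_{i}(\mathbf{Y})^{p}\,X^{i}\in C[X,\mathbf{Y}],$$
which lies in $C[X,\mathbf{Y}]$ because raising each coefficient of $c_{i}$ to the $p$-th power and each monomial exponent to $p$ produces a polynomial again over $C$. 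Applying the freshman's dream in characteristic $p$,
$$g(a^{p},\mathbf{b})=\sum_{i}c_{i}(\mathbf{b})^{p}a^{ip}=\Big(\sum_{i}c_{i}(\mathbf{b})a^{i}\Big)^{p}=f(a,\mathbf{b})^{p}=0.$$
By hypothesis $g(c^{p},\mathbf{d})=0$, and the same calculation in reverse gives $g(c^{p},\mathbf{d})=f(c,\mathbf{d})^{p}$. Since $K$ is a field, $f(c,\mathbf{d})=0$, and $f$ was arbitrary, so $(c,\mathbf{d})\in\locus(a,\mathbf{b}/C)$.

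The only subtle point is the construction of $g$ in the reverse direction and the verification that its coefficients still lie in $C$; beyond that the argument is a direct computation using $(x+y)^{p}=x^{p}+y^{p}$. I do not expect a real obstacle here, only the need to be careful that the substitution $X\mapsto X^{p}$ and the coefficient-wise Frobenius both produce elements of $C[X,\mathbf{Y}]$, which they do without any hypothesis on $C$.
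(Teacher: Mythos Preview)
Your proof is correct and matches the paper's approach almost verbatim. The forward direction is identical, and in the reverse direction your auxiliary polynomial $g(X,\mathbf{Y})=\sum_{i}c_{i}(\mathbf{Y})^{p}X^{i}$ coincides with the paper's $f^{[p]}(X,\mathbf{Y}^{p})$ (where $f^{[p]}$ raises only the $C$-coefficients of $f$ to the $p$-th power), just presented by grouping terms according to powers of $X$ rather than monomial-by-monomial.
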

\begin{proof}
Suppose first that $(c,\mathbf{d})\in\locus(a,\mathbf{b}/C)$.
Let $f\in C[X,\mathbf{Y}]$ be such that $f(a^{p},\mathbf{b})=0$.
Choose $g\in C[X,\mathbf{Y}]$ such that $g(X,\mathbf{Y})=f(X^{p},\mathbf{Y})$.
Then $g(a,\mathbf{b})=0$, and so $g(c,\mathbf{d})=0$, i.e.\ $f(c^{p},\mathbf{d})=0$.
Therefore $(c^{p},\mathbf{d})\in\locus(a^{p},\mathbf{b}/C)$.

For the converse, suppose that $(c^{p},\mathbf{d})\in\locus(a^{p},\mathbf{b}/C)$.
Let $f\in C[X,\mathbf{Y}]$ be such that $f(a,\mathbf{b})=0$.
Denote by $f^{[p]}\in C[X,\mathbf{Y}]$ the polynomial obtained by raising the coefficients (but not the variables) of $f$ to the $p$-th power.
Since the Frobenius map $\Phi$ is a ring homomorphism, we have $\Phi\big(f(X,\mathbf{Y})\big)=f^{[p]}(X^{p},\mathbf{Y}^{p})$, where if $\mathbf{Y}=(Y_{1},... ,Y_{n})$ then $\mathbf{Y}^{p}=(Y_{1}^{p},...,Y_{n}^{p})$.
Therefore $f^{[p]}(a^{p},\mathbf{b}^{p})=0$,
and so $f^{[p]}(c^{p},\mathbf{d}^{p})=0$.
It follows that $f(c,\mathbf{d})=0$.
This shows that $(c,\mathbf{d})\in\locus(a,\mathbf{b}/C)$, as required.
\end{proof}

\begin{lemma}\label{lem:Frobenius.locus}
Let $(a,\mathbf{b})\in K^{x,\mathbf{y}}$.
Then $\Phi\big(\pr_{x}\locus(a,\mathbf{b}/C)\big)=\pr_{x}\locus(a^{p},\mathbf{b}/C)\cap K^{(p)}$.
\end{lemma}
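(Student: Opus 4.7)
The plan is to prove both inclusions directly, using \autoref{lem:Frobenius.locus.i} as the key ingredient. The statement is essentially a bookkeeping translation of that lemma from loci to their projections, with the only subtle point being that Frobenius is not surjective on $K$, so the intersection with $K^{(p)}$ on the right-hand side is genuinely needed.

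For the inclusion $\Phi(\pr_{x}\locus(a,\mathbf{b}/C))\subseteq\pr_{x}\locus(a^{p},\mathbf{b}/C)\cap K^{(p)}$, I would take $z\in\Phi(\pr_{x}\locus(a,\mathbf{b}/C))$, so $z=c^{p}$ for some $c\in K$ with $(c,\mathbf{d})\in\locus(a,\mathbf{b}/C)$ for some $\mathbf{d}\in K^{\mathbf{y}}$. Applying the forward direction of \autoref{lem:Frobenius.locus.i} gives $(c^{p},\mathbf{d})\in\locus(a^{p},\mathbf{b}/C)$, so $z=c^{p}\in\pr_{x}\locus(a^{p},\mathbf{b}/C)$, and clearly $z\in K^{(p)}$.

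For the reverse inclusion, take $z\in\pr_{x}\locus(a^{p},\mathbf{b}/C)\cap K^{(p)}$. Since $z\in K^{(p)}$, write $z=c^{p}$ for some $c\in K$; and since $z\in\pr_{x}\locus(a^{p},\mathbf{b}/C)$, there is some $\mathbf{d}\in K^{\mathbf{y}}$ with $(c^{p},\mathbf{d})\in\locus(a^{p},\mathbf{b}/C)$. The converse direction of \autoref{lem:Frobenius.locus.i} then gives $(c,\mathbf{d})\in\locus(a,\mathbf{b}/C)$, so $c\in\pr_{x}\locus(a,\mathbf{b}/C)$ and hence $z=\Phi(c)\in\Phi(\pr_{x}\locus(a,\mathbf{b}/C))$.

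There is no real obstacle here: the argument is a two-line unpacking of the previous lemma once one notices that the only information lost under $\Phi$ on the $x$-coordinate is the failure of surjectivity, which is exactly compensated by intersecting with $K^{(p)}$.
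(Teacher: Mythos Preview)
Your argument is correct and is exactly the approach the paper takes: its proof is the single line ``This follows from \autoref{lem:Frobenius.locus.i},'' and you have simply spelled out the two inclusions that this entails. There is nothing to add.
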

\begin{proof}
This follows from \autoref{lem:Frobenius.locus.i}.
\end{proof}

\begin{proposition}\label{prp:henselian}
Let $K$ be a henselian field and let $\phi(x;\mathbf{q})$ be an existential formula in the language of rings that defines in $K$ an infinite set $X=\phi(K;\mathbf{q})$.
Then there exist $m,n<\omega$ such that
$$K^{(p^{n})}\subseteq(X)_{m}.$$
\end{proposition}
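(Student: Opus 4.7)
The plan is to pass to an $\aleph_{1}$-saturated elementary extension $K^{*}\succeq K$, establish the concrete inclusion $(K^{*})^{(p)}\subseteq(X^{*})_{m}$ for some $m<\omega$ (so $n=1$ will suffice in the statement), and transfer it back down to $K$: the inclusion is expressed by the first-order sentence $\forall x\,\forall y\,\bigl(x=y^{p}\to\phi_{m}(x;\mathbf{q})\bigr)$, which descends through the elementary embedding $K\preceq K^{*}$. Let $C$ be the relative algebraic closure in $K$ of $\Lambda_{K}(\mathbf{q})$; by \autoref{lem:absoluteness} and the fact that elementary extensions preserve the root sets of polynomials over $K$, $C$ is also the relative algebraic closure of $\Lambda_{K^{*}}(\mathbf{q})$ in $K^{*}$, and by \autoref{lem:countable} it is countable, so $K^{*}/C$ is regular.

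First I would use the standard reduction to write $X=\pr_{x}V$ for an algebraic set $V\subseteq K^{1+r}$ defined by polynomials in $C[x,y_{1},\ldots,y_{r}]$. Since $X$ is infinite and $C$ is countable, the partial type over $C\cup\{\mathbf{q}\}$ consisting of $\phi(x;\mathbf{q})$ together with $g(x)\neq0$ for each nonzero $g\in C[x]$ is finitely satisfiable in $K^{*}$, and $\aleph_{1}$-saturation realises it by an $a\in X^{*}$ transcendental over $C$; pick $\mathbf{b}\in(K^{*})^{r}$ with $(a,\mathbf{b})\in V^{*}$. Since $K^{*}/C$ is separable, so is $C(a,\mathbf{b})/C(a)$, and \autoref{lem:x-projection} then supplies some $\gamma\in\Gamma_{v}$ with
$$B(\gamma;a)\;\subseteq\;\pr_{x}\locus(a,\mathbf{b}/C)\;\subseteq\;\pr_{x}V^{*}\;=\;X^{*}.$$
\autoref{lem:Frobenius.balls} gives $\Phi\bigl(B(\gamma;a)\bigr)=B(p\gamma;a^{p})\cap(K^{*})^{(p)}$, and each element of this image is the $p$-th power of a single element of $X^{*}$, so lies in $(X^{*})_{m_{0}}$ for some fixed $m_{0}<\omega$.

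To finish, I would apply \autoref{lem:balls.fields} with $n=1$, $\alpha=p\gamma$, and centre $a^{p}\in(K^{*})^{(p)}$, which expresses every $p$-th power in $K^{*}$ as a ratio $(x-a^{p})/(y-a^{p})$ with $x,y\in B(p\gamma;a^{p})\cap(K^{*})^{(p)}$ and $y\neq a^{p}$. Writing $x=z^{p}$ and $y=w^{p}$ with $z,w\in B(\gamma;a)\subseteq X^{*}$, each such element equals $(z^{p}-a^{p})/(w^{p}-a^{p})$, a ratio of two fixed degree-$p$ polynomials over the prime field in the three $X^{*}$-elements $z,w,a$. For any $m<\omega$ large enough to list these polynomials in the enumeration underlying $\phi_{m}$, one has $(K^{*})^{(p)}\subseteq(X^{*})_{m}$; elementarity transfers this to $K^{(p)}\subseteq(X)_{m}$, giving $n=1$. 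The main obstacle is the coordination of the saturation step with \autoref{lem:x-projection}: confirming that $C$ does not grow inside $K^{*}$, producing a transcendental $a\in X^{*}$, and verifying the separability of $C(a,\mathbf{b})/C(a)$ so that an entire open ball of $X^{*}$-points is furnished. Once that ball is in hand, \autoref{lem:Frobenius.balls} together with \autoref{lem:balls.fields} assemble the conclusion mechanically.
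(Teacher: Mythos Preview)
There is a genuine gap at the separability step, and it is fatal rather than a routine verification. From $K^{*}/C$ separable you get that $C(a,\mathbf{b})/C$ is separable, but this does \emph{not} imply that $C(a,\mathbf{b})/C(a)$ is separable: separability of $C(a,\mathbf{b})/C$ only guarantees the existence of \emph{some} separating transcendence base, not that the particular transcendental element $a$ extends to one. In fact your conclusion that $n=1$ always suffices is false. Take $K=\mathbb{F}_{p}((t))$ and $\phi(x)\equiv\exists y\,(x=y^{p^{2}})$, so that $X=K^{(p^{2})}$; this is already a field, hence $(X)_{m}\subseteq K^{(p^{2})}$ for every $m$, while $t^{p}\in K^{(p)}\setminus K^{(p^{2})}$. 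In this example any $a\in X^{*}$ has unique witness $b$ with $b^{p^{2}}=a$, and then $C(a,b)/C(a)=C(b)/C(b^{p^{2}})$ is purely inseparable of degree $p^{2}$, so the hypothesis of \autoref{lem:x-projection} fails outright.

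The paper earns the right to apply \autoref{lem:x-projection} by a much longer route: it first invokes \autoref{prp:Egen.uniformly.big} and \autoref{prp:big.subfields} to show that $(K^{*})^{(p^{\infty})}/(E^{(*)})^{(p^{\infty})}$ is a finite extension, which is what produces an $a$ lying simultaneously in some $(X^{*})_{l}$ \emph{and} in $(K^{*})^{(p^{\infty})}$, so that all iterated $p$-th roots $a^{p^{-k}}$ exist in $K^{*}$. By \autoref{lem:roots} the extension $K^{*}/C(a^{p^{-k}}:k<\omega)$ is then separable, and since $\mathbf{b}$ is finite there is some $n<\omega$ with $C(a^{p^{-n}},\mathbf{b})/C(a^{p^{-n}})$ separable; \autoref{lem:x-projection} is applied to the pair $(a^{p^{-n}},\mathbf{b})$ rather than to $(a,\mathbf{b})$, and one then pushes forward with Frobenius $n$ times via \autoref{lem:Frobenius.balls} and \autoref{lem:Frobenius.locus}. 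The exponent $n$ in the statement is precisely this $n$, and the example above shows it cannot be taken to be $1$ uniformly.
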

\begin{proof}
As usual, let $C$ denote the relative algebraic closure in $K$ of $\Lambda_{K}(\mathbf{q})$.
Note that $K/C$ is regular.
Let $K^{*}$ be an $\aleph_{0}$-saturated elementary extension of $K$.
Note that $K^{*}/C$ is also regular.
By the saturation of $K^{*}$, we have that
$(K^{*})^{(p^{\infty})}C/C$ is a transcendental extension.
By \autoref{prp:Egen.uniformly.big}, $E:=(X)$ is a uniformly big subfield of $K$,
and thus $E^{(*)}=(X^{*})$ is a big subfield of $K^{*}$.
From \autoref{prp:big.subfields}, we have that $(K^{*})^{(p^{\infty})}/(E^{(*)})^{(p^{\infty})}$ is a finite separable extension.

Therefore $(E^{(*)})^{(p^{\infty})}C/C$ is a transcendental extension.
In particular, there exists
$a\in(E^{(*)})^{(p^{\infty})}$ which is transcendental over $C$.
Let $l<\omega$ be such that $a\in(X^{*})_{l}$.
Since $(X)_{l}$ is also a set defined by an existential $\mathcal{L}_{\mathrm{ring}}$-formula, namely $\phi_{l}(x;\mathbf{q})$, there exists an algebraic set $V_{l}$ such that $(X)_{l}=\pr_{x}(V_{l})$.
Let $\mathbf{b}\in(K^{*})^{\mathbf{y}}$ be such that $(a,\mathbf{b})\in V^{*}_{l}$.
By \autoref{lem:roots}, $K^{*}/C(a^{p^{-n}}|n<\omega)$ is separable.
Since $\mathbf{b}$ is a finite set, there exists $n<\omega$ such that $C(a^{p^{-n}},\mathbf{b})/C(a^{p^{-n}})$ is separable.
By \autoref{lem:x-projection}, there exists $\alpha'\in\Gamma^{*}_{v}$ such that
\begin{align*}
B^{*}(\alpha';a^{p^{-n}})&\subseteq\pr_{x}\locus^{*}(a^{p^{-n}},\mathbf{b}/C),
\end{align*}
where $B^{*}(\alpha';a^{p^{-n}})$ and $\locus^{*}(a^{p^{-n}},\mathbf{b}/C)$ denote the appropriate ball and locus in $K^{*}$.
By iteratively applying the Frobenius map $\Phi$ to both sides of the above inclusion, and setting $\alpha:=p^{n}\alpha'$, we have
\begin{align*}
B^{*}(\alpha;a)\cap(K^{*})^{(p^{n})}&\subseteq\pr_{x}\locus^{*}(a,\mathbf{b}/C),
\end{align*}
by \autoref{lem:Frobenius.balls} and \autoref{lem:Frobenius.locus}.
Note that we have the inclusion $\pr_{x}\locus^{*}(a,\mathbf{b}/C)\subseteq(X^{*})_{l}$.

Applying \autoref{lem:balls.fields} in $K^{*}$, we have
\begin{align*}
(K^{*})^{(p^{n})}&=\bigg\{\frac{x-a}{y-a}\;\bigg|\;x,y\in B^{*}(\alpha;a)\cap (K^{*})^{(p^{n})},y\neq a\bigg\}\\
&\subseteq\bigg\{\frac{x-w}{y-z}\;\bigg|\;w,x,y,z\in(X^{*})_{l},y\neq z\bigg\}. 
\end{align*}
This shows that $(K^{*})^{(p^{n})}$ is contained in the image of $(X^{*})_{l}$ under a certain rational function.
By definition, $(X^{*})_{l}$ is already the union of the image of $X^{*}$ under the rational functions $\frac{f_{i}}{f_{j}}$, for $i,j<l$.\footnote{
In fact, we may combine finitely many rational functions into one.
More precisely, there exist multivariable polynomials $f,g$ over the prime field such that $(X^{*})_{l}=\{f(\mathbf{a})/g(\mathbf{b})\;|\;\mathbf{a},\mathbf{b}\in (X^{*})^{k},g(\mathbf{b})\neq0\}$.
}
Thus there exists $m\geq l$ such that
$(K^{*})^{(p^{n})}\subseteq(X^{*})_{m}$.
Finally, since $K\preceq K^{*}$ is an elementary extension, we have $K^{(p^{n})}\subseteq (X)_{m}$, as required.
\end{proof}


\section{Proof of Theorem \ref{thm:main.theorem}}
\label{section:proof}

We recall one of the several characterizations of largeness given by Pop.

\begin{fact}[{cf \cite[Proposition 1.1]{Pop96}}]
A field $L$ is large if and only if $L$ is existentially closed in $L((t))$.
\end{fact}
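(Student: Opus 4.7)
The plan is to prove both implications, exploiting the fact that $L((t))$ is henselian with residue field $L$.

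For the $(\Leftarrow)$ direction, I would assume $L$ is existentially closed in $L((t))$ and show $L$ is large as follows. Let $C$ be a smooth $L$-curve with an $L$-point $p$. For each $n<\omega$, the statement ``$C$ has at least $n$ distinct points'' is an existential $\mathcal{L}_{\mathrm{ring}}$-sentence with parameters in $L$, so by existential closure it suffices to exhibit arbitrarily many $L((t))$-points of $C$. Since $L((t))$ is henselian with residue field $L$ and $p$ is smooth on $C$, the implicit function theorem (\autoref{fact:IFT}) supplies a continuous parametrization of a neighbourhood of $p$ in $C(L((t)))$ by a local uniformizer $u$, with $u=0$ corresponding to $p$. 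Substituting $u=c\cdot t$ for distinct $c\in L$ produces arbitrarily many distinct $L((t))$-points, noting that $L$ must be infinite since no finite field can be large.

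For the $(\Rightarrow)$ direction, I would assume $L$ is large. An existential $L$-sentence true in $L((t))$ reduces, in standard fashion, to having some $\mathbf{b}(t)\in L((t))^n$ with $f_i(\mathbf{b}(t))=0$ for finitely many $f_i\in L[\mathbf{y}]$ and $g(\mathbf{b}(t))\neq 0$ for a further $g\in L[\mathbf{y}]$; the task is to find such a tuple in $L^n$. Let $W=\locus(\mathbf{b}(t)/L)$. If $\dim W=0$, then $\mathbf{b}(t)\in L^n$ and we are finished. Otherwise, I would cut $W$ by generic $L$-hyperplanes through $\mathbf{b}(t)$ to obtain a geometrically integral $L$-curve $C$ containing $\mathbf{b}(t)$ as a smooth point, and let $\bar C$ denote its smooth projective completion. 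The valuative criterion of properness applied to $L[[t]]\subseteq L((t))$ then extends $\mathbf{b}(t)$ to an $L[[t]]$-point of $\bar C$, whose reduction modulo $t$ is an $L$-rational point $p\in\bar C(L)$.

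By largeness applied to the smooth $L$-curve $\bar C$ and the point $p$, the set $\bar C(L)$ is infinite. The complement $\bar C\setminus(C\cap\{g\neq 0\})$ is a proper closed subvariety of the irreducible curve $\bar C$ (it omits the generic point, which is $\mathbf{b}(t)$), hence finite; discarding these finitely many bad points from the infinite set $\bar C(L)$ leaves an $L$-point of $C$ at which $g\neq 0$, witnessing the sentence in $L$. The hard part will be the Bertini-style cutting in the $(\Rightarrow)$ direction: producing a geometrically integral $L$-curve through the prescribed $L((t))$-point that is smooth at that point. When $L$ is imperfect some care is needed, but the requisite separability can be arranged by first choosing a separating transcendence base as in \autoref{section:preliminaries}, after which standard Bertini arguments go through.
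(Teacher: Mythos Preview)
The paper does not prove this statement at all: it is recorded as a \emph{Fact} with a citation to \cite[Proposition~1.1]{Pop96}, and the paper simply uses it as a black box in the proof of \autoref{prp:large}. So there is nothing to compare against; you have supplied an argument where the paper is content to quote.

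That said, your sketch is essentially the standard proof one finds in Pop's paper and the subsequent literature, and it is sound in outline. Two small points are worth tightening. In the $(\Leftarrow)$ direction, your justification that $L$ is infinite (``since no finite field can be large'') is circular: you are in the middle of proving that $L$ is large. The correct reason is immediate from your hypothesis: a finite field cannot be existentially closed in the infinite field $L((t))$. Alternatively, you do not need $L$ infinite at all, since substituting $u=t^{k}$ for $k<\omega$ already gives infinitely many distinct $L((t))$-points near $p$. In the $(\Rightarrow)$ direction, be careful with the paper's notation: here $\locus(\mathbf{b}(t)/L)$ denotes a set of rational points, whereas your Bertini cutting and smooth completion must take place on the level of varieties (or schemes); you clearly intend the Zariski closure as a variety, but it is worth saying so. The acknowledged delicacy over imperfect base fields is real but manageable, and your plan to first pass to a separating transcendence base is the right move.
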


Using this characterization, we are able to generalise \autoref{prp:henselian}.


\begin{proposition}\label{prp:large}
Let $L$ be a large field and let $\phi(x;\mathbf{q})$ be an existential formula in the language of rings that defines in $L$ an infinite set $X=\phi(L;\mathbf{q})$.
Then there exist $m,n<\omega$ such that
$$L^{(p^{n})}\subseteq(X)_{m}.$$
\end{proposition}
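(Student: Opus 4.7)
The plan is to deduce Proposition \ref{prp:large} from Proposition \ref{prp:henselian} by embedding $L$ into the henselian field $K := L((t))$ equipped with its $t$-adic valuation, and then transferring the conclusion back to $L$ using Pop's characterization.

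First I would observe that the parameters $\mathbf{q}$ lie in $L \subseteq K$, so the formula $\phi(x;\mathbf{q})$ defines in $K$ a set $X' := \phi(K;\mathbf{q})$ that contains $X$. In particular $X'$ is infinite, since $X$ is. Because $K$ is henselian, Proposition \ref{prp:henselian} applies and yields natural numbers $m,n<\omega$ with $K^{(p^n)} \subseteq (X')_m$; that is, every $p^n$-th power in $K$ satisfies $\phi_m(x;\mathbf{q})$ in $K$.

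Next I would pull this back to $L$. Let $a \in L^{(p^n)}$, so $a = b^{p^n}$ for some $b \in L$. Then $a \in L \subseteq K$ and, since $b \in K$, also $a \in K^{(p^n)}$. By the previous step, $K \models \phi_m(a;\mathbf{q})$. Since the excerpt notes that $\phi_m(x;\mathbf{q})$ is (equivalent to) an existential $\mathcal{L}_{\mathrm{ring}}$-formula whenever $\phi$ is, and since $L$ is existentially closed in $K = L((t))$ by Pop's characterization of largeness, we may descend the existential sentence $\phi_m(a;\mathbf{q})$ from $K$ to $L$, obtaining $L \models \phi_m(a;\mathbf{q})$; equivalently, $a \in (X)_m$.

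The argument is largely a packaging exercise rather than a technical leap: the substance of the result lives in Proposition \ref{prp:henselian}, and largeness is used purely to supply the existential embedding into a henselian field. Consequently I do not foresee a serious obstacle. The only points requiring care are to keep track that $\phi_m$ really is existential (so that existential closedness can be invoked), and that the chain $L^{(p^n)} \subseteq K^{(p^n)} \subseteq (X')_m$ combined with the existential descent gives $L^{(p^n)} \subseteq (X)_m$ with the \emph{same} $m$ and $n$ produced by the henselian case.
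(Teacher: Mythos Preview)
Your proposal is correct and follows essentially the same approach as the paper: pass to $K=L((t))$, apply Proposition~\ref{prp:henselian} to obtain $K^{(p^{n})}\subseteq(X')_{m}$, and then use that $\phi_{m}$ is existential together with $L\preceq_{\exists}L((t))$ to descend. The only cosmetic difference is that the paper phrases the descent as the set-theoretic identity $(X)_{m}=(X')_{m}\cap L$ rather than arguing element by element.
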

\begin{proof}
Let $X':=\phi(L((t));\mathbf{q})$ denote the set defined by the same formula in the field $L((t))$.
Note that the $t$-adic valuation on $L((t))$ is both nontrivial and henselian.
Since existential formulas `go up', we have $X\subseteq X'$.
Thus $X'$ is an infinite existentially definable subset of the henselian field $L((t))$.
By \autoref{prp:henselian}, there exist $m,n<\omega$ such that
$$L((t))^{(p^{n})}\subseteq(X')_{m}.$$
Since $L\preceq_{\exists}L((t))$,
we have
$(X)_{m}=(X')_{m}\cap L$.
Therefore we have the inclusions
\begin{align*}
L^{(p^{n})}=L((t))^{(p^{n})}\cap L\subseteq(X')_{m}\cap L=(X)_{m},
\end{align*}
as required.
\end{proof}

Finally, \autoref{thm:main.theorem} is an immediate corollary of \autoref{prp:large}.


\section*{Acknowledgements}
This research forms part of the author's doctoral thesis
which was completed under the supervision of Jochen Koenigsmann and supported by EPSRC.
The author would like to thank Arno Fehm
and Jochen Koenigsmann
for many helpful conversations, feedback, and encouragement.


\bibliographystyle{plain}

\end{document}